\newtheorem{lemma}{Lemma}
\newtheorem{proposition}{Proposition}
\newtheorem{remark}{Remark}
\newtheorem{theorem}{Theorem}
\newtheorem{definition}{Definition}
\definecolor{codegreen}{rgb}{0,0.6,0}
\definecolor{codegray}{rgb}{0.5,0.5,0.5}
\definecolor{codepurple}{rgb}{0.58,0,0.82}
\definecolor{backcolour}{rgb}{0.95,0.95,0.92}
\lstdefinestyle{mystyle}{
    backgroundcolor=\color{backcolour},   
    commentstyle=\color{codegreen},
    keywordstyle=\color{magenta},
    numberstyle=\tiny\color{codegray},
    stringstyle=\color{codepurple},
    basicstyle=\ttfamily\footnotesize,
    breakatwhitespace=false,         
    breaklines=true,                 
    captionpos=b,                    
    keepspaces=true,                 
    numbers=left,                    
    numbersep=5pt,                  
    showspaces=false,                
    showstringspaces=false,
    showtabs=false,                  
    tabsize=2
}
\pgfplotsset{compat=newest}
\newcommand{\logLogSlopeTriangle}[5]
{

    \pgfplotsextra
    {
        \pgfkeysgetvalue{/pgfplots/xmin}{\xmin}
        \pgfkeysgetvalue{/pgfplots/xmax}{\xmax}
        \pgfkeysgetvalue{/pgfplots/ymin}{\ymin}
        \pgfkeysgetvalue{/pgfplots/ymax}{\ymax}

        \pgfmathsetmacro{\xArel}{#1}
        \pgfmathsetmacro{\yArel}{#3}
        \pgfmathsetmacro{\xBrel}{#1-#2}
        \pgfmathsetmacro{\yBrel}{\yArel}
        \pgfmathsetmacro{\xCrel}{\xArel}

        \pgfmathsetmacro{\lnxB}{\xmin*(1-(#1-#2))+\xmax*(#1-#2)} 
        \pgfmathsetmacro{\lnxA}{\xmin*(1-#1)+\xmax*#1} 
        \pgfmathsetmacro{\lnyA}{\ymin*(1-#3)+\ymax*#3} 
        \pgfmathsetmacro{\lnyC}{\lnyA+#4*(\lnxA-\lnxB)}
        \pgfmathsetmacro{\yCrel}{\lnyC-\ymin)/(\ymax-\ymin)} 
        
        \coordinate (A) at (rel axis cs:\xArel,\yArel);
        \coordinate (B) at (rel axis cs:\xBrel,\yBrel);
        \coordinate (C) at (rel axis cs:\xCrel,\yCrel);

        \draw[#5]   (A)-- node[pos=0.5,anchor=north] {1}
                    (B)-- 
                    (C)-- node[pos=0.5,anchor=west] {#4}
                    cycle;
    }
}
\newcommand{\logLogSlopeTriangleinv}[5]
{
    \pgfplotsextra
    {
        \pgfkeysgetvalue{/pgfplots/xmin}{\xmin}
        \pgfkeysgetvalue{/pgfplots/xmax}{\xmax}
        \pgfkeysgetvalue{/pgfplots/ymin}{\ymin}
        \pgfkeysgetvalue{/pgfplots/ymax}{\ymax}

        \pgfmathsetmacro{\xArel}{#1}
        \pgfmathsetmacro{\yArel}{#3}
        \pgfmathsetmacro{\xBrel}{#1-#2}
        \pgfmathsetmacro{\yBrel}{\yArel}
        \pgfmathsetmacro{\xCrel}{\xBrel}

        \pgfmathsetmacro{\lnxB}{\xmin*(1-(#1-#2))+\xmax*(#1-#2)} 
        \pgfmathsetmacro{\lnxA}{\xmin*(1-#1)+\xmax*#1} 
        \pgfmathsetmacro{\lnyA}{\ymin*(1-#3)+\ymax*#3} 
        \pgfmathsetmacro{\lnyC}{\lnyA+#4*(\lnxA-\lnxB)}
        \pgfmathsetmacro{\yCrel}{(\lnyC-\ymin)/(\ymax-\ymin)} 
        
        \coordinate (A) at (rel axis cs:\xArel,\yArel);
        \coordinate (B) at (rel axis cs:\xBrel,\yBrel);
        \coordinate (C) at (rel axis cs:\xCrel,\yCrel);

        \draw[#5]   (A)-- node[pos=0.5,anchor=north] {1}
                    (B)-- node[pos=0.5,anchor=east] {#4}
                    (C)-- 
                    cycle;
    }
}
\title{$\varphi-$FD : A well-conditioned finite difference method inspired by $\varphi-$FEM for general geometries on elliptic PDEs\footnote{
This work was supported by the Agence Nationale de la Recherche, Project PhiFEM, under grant ANR-22- CE46-0003-01.}}
\author{Michel Duprez\footnote{MIMESIS team, Inria de l'Université de Lorraine, MLMS team, Universit\'e de Strasbourg, 2 Rue Marie Hamm, 67000 Strasbourg, France, 
\texttt{michel.duprez@inria.fr}}, 
Vanessa Lleras\footnote{IMAG, Univ Montpellier, CNRS UMR 5149, 499-554 Rue du Truel, 34090 Montpellier, France, \texttt{vanessa.lleras@umontpellier.fr}},
Alexei Lozinski\footnote{Université de Franche-Comté, Laboratoire de mathématiques de Besançon, UMR~CNRS~6623, 16 route de Gray, 25030 Besançon Cedex, France, \texttt{alexei.lozinski@univ-fcomte.fr}},
Vincent Vigon\footnote{Institut de Recherche Mathématique Avancée, UMR 7501, Université de Strasbourg et CNRS, Tonus team, Inria de l'Université de Lorraine, 7 rue René Descartes, 67000 Strasbourg, France, 
\texttt{vincent.vigon@math.unistra.fr}} 
and Killian Vuillemot\footnote{MIMESIS team, Inria de l'Université de Lorraine, MLMS team, Universit\'e de Strasbourg, 2 Rue Marie Hamm, 67000 Strasbourg, France, 
IMAG, Univ Montpellier, CNRS UMR 5149, 499-554 Rue du Truel, 34090 Montpellier, France, \texttt{killian.vuillemot@umontpellier.fr}}
}
\begin{document}
\maketitle

\begin{abstract}
    This paper presents a new finite difference method, called $\varphi$-FD, inspired by the $\varphi$-FEM {approach} for solving elliptic partial differential equations (PDEs) on general geometries. The proposed method uses Cartesian grids, ensuring   simplicity in implementation. Moreover, contrary to the previous finite {difference} scheme on non-rectangular domain, the associated matrix is well-{conditioned}.
    The use of a level-set function for the geometry description makes this approach relatively flexible. 
    We prove the quasi-optimal convergence rates in several norms and the fact {that} the matrix is well-conditioned. 
    Additionally, the paper explores the use of multigrid techniques to further accelerate the computation. 
    Finally, numerical experiments in both 2D and 3D validate the performance of the $\varphi$-FD method compared to standard finite element methods and the Shortley-Weller approach.
\end{abstract}

\section{Introduction}

We consider here the Poisson problem with homogeneous Dirichlet boundary conditions
\begin{equation}\label{eq:poisson}
    - \Delta u = f \  \text{in} \  \Omega, \quad u = 0 \ \text{on} \ \partial \Omega,
\end{equation} 
where $f\in \mathcal{C}^{0}(\Omega)$ and $\Omega \subset \mathbb R^n$ ($n=1,\cdots,3$) is a connected domain.
In the present article, we will propose a new scheme to approximate the solution to \eqref{eq:poisson} on a Cartesian grid for general geometries $\Omega$.

\paragraph{General advantages of Cartesian grids}
It is difficult and time-consuming to generate a body-fitting grid of a complex domain.
This problem can be overcome by embedding the domain in a Cartesian grid, with the following advantages: 
\begin{itemize}
\item  The grid generation is simple and fast.
\item  Boundaries or interfaces can be easily represented by level-set functions. 
\item  Computations can be parallelized. 
\item Once the problem is posed on the Cartesian grid (which is done analytically), we no longer need interpolation, except for some approaches as the multigrid one presented in \ref{sec:multi}.
\end{itemize}

\paragraph{Finite difference method}
{The use of Cartesian grids is mandatory to solve elliptic partial differential equations with finite difference approaches. To do so on complex geometries, the main approach used in the literature is the method introduced by Shortley and Weller in \cite{shortley1938numerical}.}
In \cite{Weynans2017,bramble}, the authors have developed convergence study techniques for such finite difference methods. These papers use discrete Green functions and a discrete maximum principle to obtain precise estimates of the coefficients of the inverse matrix. These estimates sometimes lead to a phenomenon of supraconvergence \cite{yoon}, which means that the numerical scheme converges to a higher order than the one expected.
In \cite{Weynans_2012}, they have considered elliptic problems with immersed interfaces.
It has been proposed in \cite{gibou} a second-order accurate scheme to {solve} the Poisson equation with Dirichlet boundary conditions on irregular domains.
The immersed interface method \cite{Li} is based on a Cartesian grid and is
 associated with a second-order finite difference
scheme for very general second-order elliptic and parabolic linear PDEs. They solve boundary value problems, extending past the boundary to a computational box.
So, the combination of finite difference techniques and an accurate unfitted method is, therefore, a natural idea. The drawback of these finite difference methods is that the associated matrix is not well conditioned.

\paragraph{Finite element method}
Now, let's review the techniques based on the non-conform finite elements. Initial approaches like \cite{IBMrev,glowinski,girault} have a lack of precision due to their simple treatment of boundary conditions and also produce not well-conditioned matrix. 
Over the past two decades, have emerged more accurate 
methods, including XFEM \cite{moes,haslin}, CutFEM \cite{burman1,burman2,cutfemrev}, and the Shifted Boundary Method (SBM) \cite{SBM}. 
They are mainly optimally convergent and the associated matrix is well-conditioned, but require non-standard quadrature rules or extrapolations to assemble the matrices.
To avoid these constraints, the authors of \cite{duprez} have developed a non-conforming method called $\varphi$-FEM, which uses a level-set function to describe the domain. 
$\varphi$-FEM has already been demonstrated to be faster and more accurate than the classical finite element method on several problems \cite{cotin2023phi,phiFEM2,duprez2023phi,duprez2023phiheat}. In a recent paper \cite{duprez2024varphi}, it has been proposed a combination with a machine learning approach called $\varphi$-FEM-FNO based on the Fourier Neural Operator, which needs Cartesian grids to perform discrete fast Fourier transform. 
\bigskip

{
In the present article, we propose a finite difference scheme on a Cartesian grid inspired by $\varphi$\nobreakdash-FEM.
As in this approach, the domain is described by a level-set function $\varphi$, which will be used to impose the boundary conditions by penalization. This method, which we called $\varphi$-FD, combines optimal accuracy, well conditioning of the associated matrix, and simplicity of implementation (few lines of python code with the help of \texttt{scipy} {\cite{scipy}}, see appendix).} 

\paragraph{Article outline}
The paper is organized as follows: 
Section 2 describes the expected formulation of the $\varphi$-FD method for Poisson equation with homogeneous Dirichlet boundary conditions and gives theorems on the convergence and on the conditioning of the associated matrix. 
Section \ref{sec:link} explains the parallel with the original $\varphi$-FEM method. 
    Section 4 contains the proof of the two main theorems of section 2. 
    Section 5 proposes an alternative scheme which is numerically optimally convergent. 
    Section 6  is devoted to the numerical illustration of the method and a combination of our scheme with a multigrid approach. In the appendix, we give {an example of implementation for $\varphi$-FD}  in the \texttt{python} language.

\section{Main results}\label{sec:main}

{The domain is described by a level-set function $\varphi$ such that 
\begin{equation}\label{eq:phi}
     \Omega = \{ \varphi < 0 \}\,. 
\end{equation}
In particular, its boundary $\Gamma $ is given by $ \{ \varphi = 0 \}$.
}

{
We suppose that $\Omega$ is included in $\mathcal{O}:= \prod\limits_{i=1}^n[a_i,b_i]$ with $b_i-a_i=b_j-a_j$ for $i\neq j$. 
Let $N\in\mathbb{N}^*$, $h=(b_1-a_1)/N$ and we consider the Cartesian grid covering this rectangle:
$$
\mathcal{O}_h:=\{x_{\alpha}:\alpha \in \{0,\cdots, N\}^n\}
$$ 
with $x_{\alpha_i}=a_i+\alpha_ih$ for $\alpha=(\alpha_1,\cdots,\alpha_n)$. }

{
We denote by
$$D=\begin{cases}
\{1\},&\text{ if }n=1,\\
\{(1,0),(0,1)\},&\text{ if }n=2,\\
\{(1,0,0),(0,1,0),(0,0,1)\},&\text{ if }n=3.
\end{cases}$$

We define 
the following sub-grids:
$$\Omega_h = \{x_{\alpha}\in \mathcal{O}_h:x_{\alpha}\in \Omega \mbox{ or }  x_{\alpha\pm d}\in \Omega, ~d\in D\}\,, $$
$$\Omega_h^{\text{int}} = \{x_{\alpha}\in \mathcal{O}_h:x_{\alpha}\in \Omega\}\,{.} $$
{Moreover, let $\overline{\Omega}_h$,  be the union of}
 squares with vertices $x_{\alpha}\in \mathcal{O}_h$ intersecting $\Omega$
and  $\overline{\Omega}_h^{\text{int}}$ be the union of squares with vertices $x_{\alpha}\in \mathcal{O}_h$ 
included in $\Omega$.
}
An example is given in Figure \ref{fig:situation}.

Let us now describe our finite difference method. 
We propose here a description of the scheme for any dimension, but it will be given in the two-dimensional case with explicit indices in Section \ref{sec:prove}. 
{Find a discrete function $u_h=(u_{\alpha})_{\alpha:x_{\alpha}\in\Omega_h}$ defined on $\Omega_h$ such that}
\begin{equation}\label{eq:diff}
a_h(u_h,v_h)=l_h(v_h),
\end{equation}
{for all discrete function $v_h=(v_{\alpha})_{\alpha:x_{\alpha}\in\Omega_h}$ defined on $\Omega_h$,} 
where
$$
a_h(u_h,v_h)=(-\Delta_h u_h,v_h)+b_h(u_h,v_h)+j_h(u_h,v_h)\,,
$$
and 
$$
l_h(v_h)=\sum_{\scriptstyle \alpha:x_{\alpha}\in \Omega_h^{\text{int}}}\sum_{\scriptstyle d\in D} f_{\alpha}v_{\alpha}\,,
$$
with
$f_{\alpha}=(f(x_{\alpha}))_{\alpha}$, 
    the discrete Laplacian:
\begin{equation*}
(-\Delta_h u_h,v_h)=\sum_{\scriptstyle \alpha:x_{\alpha}\in \Omega_h^{\text{int}}}\sum_{\scriptstyle d\in D}\frac{ - u_{\alpha -d} +2 u_{\alpha}- u_{\alpha +d}}{h^2}v_{\alpha},
  \end{equation*}
a penalization for the boundary conditions
\begin{equation*}
b_h(u_h,v_h) =\frac{\gamma}{2 h^2}\sum_{
(\alpha,d)\in B} 
\frac{1}{\varphi_{\alpha}^2 + \varphi_{\alpha+d}^2}
(\varphi_{\alpha+d} u_{\alpha} - \varphi_{\alpha} u_{\alpha+d}) (\varphi_{\alpha+d}
  v_{\alpha} - \varphi_{\alpha} v_{\alpha+d})
  \end{equation*} 
 and a stabilization term near the boundary
\begin{equation*}
j_h(u_h,v_h)= \sigma \sum_{(\alpha,d)\in J}\frac{- u_{\alpha-d} + 2 u_{\alpha} - u_{\alpha+d}}{h }
  \times \frac{- v_{\alpha-d} + 2 v_{\alpha} - v_{\alpha+d}}{h }
\end{equation*}
  with $\gamma,~\sigma>0$ and
  $$B=\{(\alpha,d)|\text{ the edge }x_{\alpha} - x_{\alpha+d}\text{   intersects }\Gamma\text{ and not included in }\Gamma\},$$
  $$J=\{(\alpha,d)|x_{\alpha}\in \Omega\text{ and }[x_{\alpha-d}\not\in \Omega \text{   or }x_{\alpha+d}\not\in \Omega]\}.$$

\begin{figure}[!ht]
    \centering
\begin{tikzpicture}[scale=1.2]
\definecolor{black25}{RGB}{25,25,25}
\definecolor{darkpastelpurple}{rgb}{0.59, 0.44, 0.84}

\begin{axis}[
axis line style={gray, line width=0pt},legend cell align={center}, legend columns=3,legend style={at={(0.25,-0.12)},
 /tikz/column 2/.style={column sep = 10pt}, /tikz/column 4/.style={column sep = 10pt}, anchor=south west}, grid=both,minor tick num=3,xmin=0, xmax=0.9, ymin=0.1, ymax=1, tick align=outside, xmajorticks=false, ymajorticks=false, grid style={gray, very thin},  xtick style={white}, ytick style={white}]
\path [draw=darkpastelpurple, fill=darkpastelpurple, opacity=0.3]
    (0.2,0.15)--(0.25,0.15)--(0.3,0.15)--(0.35,0.15)--(0.4,0.15)--(0.45,0.15)--(0.5,0.15)--(0.55,0.15)
    --(0.55,0.2)--(0.6,0.2)--(0.65,0.2)--(0.65,0.25)--(0.65,0.3)--(0.7,0.3)--(0.7,0.35)--(0.7,0.4)
    --(0.7,0.45)--(0.75,0.45)--(0.75,0.5)--(0.75,0.55)--(0.75,0.6)--(0.8,0.6)--(0.8,0.65)--(0.8,0.7)
    --(0.8,0.75)--(0.8,0.8)--(0.75,0.8)--(0.75,0.85)--(0.7,0.85)--(0.7,0.9)--(0.65,0.9)--(0.6,0.9)
    --(0.6,0.95)--(0.55,0.95)--(0.5,0.95)--(0.45,0.95)--(0.4,0.95)--(0.35,0.95)--(0.3,0.95)--(0.25,0.95)
    --(0.25,0.9)--(0.2,0.9)--(0.2,0.85)--(0.15,0.85)--(0.15,0.8)--(0.15,0.75)--(0.1,0.75)--(0.1,0.7)
    --(0.1,0.65)--(0.1,0.6)--(0.1,0.55)--(0.1,0.5)--(0.1,0.45)--(0.1,0.4)--(0.1,0.35)--(0.1,0.3)
    --(0.15,0.3)--(0.15,0.25)--(0.15,0.2)--(0.2,0.2)--cycle;
\addlegendimage{area legend, draw=darkpastelpurple, fill=darkpastelpurple, opacity=0.3}
\addlegendentry{$\overline{\Omega}_h$}

 \addplot [line width=0.3pt, black25, mark=*, mark size=1, mark options={solid}, only marks]
coordinates {
(0.25, 0.2)(0.3, 0.2)(0.35, 0.2)(0.4, 0.2)(0.45, 0.2)(0.5, 0.2)(0.2, 0.25)(0.25, 0.25)
(0.3, 0.25)(0.35, 0.25)(0.4, 0.25)(0.45, 0.25)(0.5, 0.25)(0.55, 0.25)(0.6, 0.25)(0.2, 0.3)
(0.25, 0.3)(0.3, 0.3)(0.35, 0.3)(0.4, 0.3)(0.45, 0.3)(0.5, 0.3)(0.55, 0.3)(0.6, 0.3)
(0.15, 0.35)(0.2, 0.35)(0.25, 0.35)(0.3, 0.35)(0.35, 0.35)(0.4, 0.35)(0.45, 0.35)(0.5, 0.35)
(0.55, 0.35)(0.6, 0.35)(0.65, 0.35)(0.15, 0.4)(0.2, 0.4)(0.25, 0.4)(0.3, 0.4)(0.35, 0.4)(0.4, 0.4)(0.45, 0.4)
(0.5, 0.4)(0.55, 0.4)(0.6, 0.4)(0.65, 0.4)(0.15, 0.45)(0.2, 0.45)(0.25, 0.45)(0.3, 0.45)(0.35, 0.45)(0.4, 0.45)
(0.45, 0.45)(0.5, 0.45)(0.55, 0.45)(0.6, 0.45)(0.65, 0.45)(0.15, 0.5)(0.2, 0.5)(0.25, 0.5)(0.3, 0.5)(0.35, 0.5)
(0.4, 0.5)(0.45, 0.5)(0.5, 0.5)(0.55, 0.5)(0.6, 0.5)(0.65, 0.5)(0.7, 0.5)(0.15, 0.55)(0.2, 0.55)(0.25, 0.55)
(0.3, 0.55)(0.35, 0.55)(0.4, 0.55)(0.45, 0.55)(0.5, 0.55)(0.55, 0.55)(0.6, 0.55)(0.65, 0.55)(0.7, 0.55)
(0.15, 0.6)(0.2, 0.6)(0.25, 0.6)(0.3, 0.6)(0.35, 0.6)(0.4, 0.6)(0.45, 0.6)(0.5, 0.6)(0.55, 0.6)(0.6, 0.6)
(0.65, 0.6)(0.7, 0.6)(0.15, 0.65)(0.2, 0.65)(0.25, 0.65)(0.3, 0.65)(0.35, 0.65)(0.4, 0.65)(0.45, 0.65)
(0.5, 0.65)(0.55, 0.65)(0.6, 0.65)(0.65, 0.65)(0.7, 0.65)(0.75, 0.65)(0.15, 0.7)(0.2, 0.7)(0.25, 0.7)(0.3, 0.7)
(0.35, 0.7)(0.4, 0.7)(0.45, 0.7)(0.5, 0.7)(0.55, 0.7)(0.6, 0.7)(0.65, 0.7)(0.7, 0.7)(0.75, 0.7)(0.2, 0.75)
(0.25, 0.75)(0.3, 0.75)(0.35, 0.75)(0.4, 0.75)(0.45, 0.75)(0.5, 0.75)(0.55, 0.75)(0.6, 0.75)(0.65, 0.75)
(0.7, 0.75)(0.75, 0.75)(0.2, 0.8)(0.25, 0.8)(0.3, 0.8)(0.35, 0.8)(0.4, 0.8)(0.45, 0.8)(0.5, 0.8)(0.55, 0.8)
(0.6, 0.8)(0.65, 0.8)(0.7, 0.8)(0.25, 0.85)(0.3, 0.85)(0.35, 0.85)(0.4, 0.85)(0.45, 0.85)(0.5, 0.85)(0.55, 0.85)
(0.6, 0.85)(0.65, 0.85)(0.3, 0.9)(0.35, 0.9)(0.4, 0.9)(0.45, 0.9)(0.5, 0.9)(0.55, 0.9)(0.25, 0.15)(0.3, 0.15)
(0.35, 0.15)(0.4, 0.15)(0.45, 0.15)(0.5, 0.15)(0.2, 0.2)(0.55, 0.2)(0.6, 0.2)(0.15, 0.25)(0.65, 0.25)(0.15, 0.3)
(0.65, 0.3)(0.1, 0.35)(0.7, 0.35)(0.1, 0.4)(0.7, 0.4)(0.1, 0.45)(0.7, 0.45)(0.1, 0.5)(0.75, 0.5)(0.1, 0.55)
(0.75, 0.55)(0.1, 0.6)(0.75, 0.6)(0.1, 0.65)(0.8, 0.65)(0.1, 0.7)(0.8, 0.7)(0.15, 0.75)(0.8, 0.75)(0.15, 0.8)
(0.75, 0.8)(0.2, 0.85)(0.7, 0.85)(0.25, 0.9)(0.6, 0.9)(0.65, 0.9)(0.3, 0.95)(0.35, 0.95)(0.4, 0.95)(0.45, 0.95)
(0.5, 0.95)(0.55, 0.95)};
\addlegendentry{$\Omega_h$}
\addplot [semithick, red]
coordinates {%
(0.666660243293647, 0.360786724979879)(0.669828819677344, 0.36942849361914)(0.672869125656037, 0.378093929652293)(0.675799784116983, 0.386779594212583)(0.678639033349618, 0.395483698197901)
(0.681404616215632, 0.404205846604395)(0.6841137137393, 0.412946807039277)(0.686782910636354, 0.421708302971459)(0.689428173452767, 0.430492816799353)(0.69206483730785, 0.439303433138877)
(0.694707594260827, 0.44814371755318)(0.697370464506109, 0.457017597562193)(0.700066752564237, 0.465929286057663)(0.702808985487275, 0.474883242963151)(0.705608816716761, 0.483884141696898)
(0.708476900628003, 0.492936882306837)(0.711422736670987, 0.50204664902065)(0.714454471841231, 0.511218988389099)(0.717578663749998, 0.520459935744744)(0.72080000439815, 0.529776187707126)
(0.724120998957477, 0.539175321293465)(0.727541594893656, 0.548666054511789)(0.731058754434772, 0.558258538847896)(0.734665958074891, 0.567964688006705)(0.738352627750032, 0.577798523386113)
(0.742103436782469, 0.587776507777938)(0.745897485064418, 0.597917849559095)(0.749707287738448, 0.608244717065317)(0.75349753790386, 0.618782313032032)(0.757223577943931, 0.629558706413435)
(0.760829525315565, 0.640604286295943)(0.764246003915892, 0.651950641275357)(0.767387468303207, 0.66362856117639)(0.77014920808679, 0.675664725842467)(0.772404342622787, 0.688076451095969)
(0.77400160317799, 0.700863660351231)(0.774765664619919, 0.713997199286341)(0.774503436416817, 0.727403187349951)(0.773021739240567, 0.740945431627691)(0.770161999974485, 0.754413155733877)
(0.765850926413968, 0.76752812772407)(0.76014891925811, 0.77998514133336)(0.753262014160821, 0.791521000788462)(0.745496283264659, 0.801977971022275)(0.737177449113383, 0.81132378143819)(0.728583817672284, 0.819624425545967)
(0.719918687640902, 0.826997884774631)(0.71131450546227, 0.833575711506289)(0.702849972477832, 0.83948113396154)(0.694567529189635, 0.844820343142766)(0.686486765428919, 0.849681135311786)(0.678613494732085, 0.854134721526572)
(0.670945529642557, 0.858238483650052)(0.663476234896077, 0.862038694241607)(0.656196679403884, 0.865572851281171)(0.649096920871591, 0.868871566283737)(0.642166759667355, 0.871960048091298)(0.635396169485822, 0.874859253446954)
(0.628775531637535, 0.877586774372888)(0.622295756508118, 0.880157520980216)(0.615948335741494, 0.882584249267805)(0.609725335602173, 0.884877975034526)(0.603619386366869, 0.887048292552811)(0.597623652834355, 0.88910362491577)
(0.591731785927699, 0.891051426144484)(0.585937897751553, 0.892898334586537)(0.580236505532575, 0.894650303828458)(0.574622509361362, 0.896312699482771)(0.56909114290696, 0.897890386007861)(0.563637950409149, 0.899387789510603)
(0.558258753888175, 0.900808955316423)(0.552949621617299, 0.902157592869321)(0.547706852941574, 0.903437111359472)(0.542526947777532, 0.904650654214224)(0.537406589954318, 0.905801123550332)(0.532342633235926, 0.906891201959601)
(0.527332081375071, 0.907923373392437)(0.522372072948888, 0.90889993891013)(0.51745987286179, 0.909823029667688)(0.512592860388528, 0.910694619521545)(0.507768516150332, 0.911516536573622)(0.50298441322985, 0.912290471783573)
(0.49823821086878, 0.913017986601535)(0.493527646528347, 0.913700520198416)(0.488850528721273, 0.914339395881209)(0.48420472942637, 0.914935826856615)(0.479588178140941, 0.915490920744694)(0.474998857715127, 0.916005683511786)
(0.470434799183607, 0.916481023103952)(0.465894076969149, 0.916917752564743)(0.461374804450134, 0.917316592643138)(0.456875129731144, 0.917678173855036)(0.452393231671137, 0.918003038128817)(0.447927315819921, 0.918291640025468)
(0.443475611110553, 0.918544347428461)(0.439036366761724, 0.918761441834788)(0.434607849041862, 0.918943118253907)(0.430188338194494, 0.919089484636593)(0.425776125456997, 0.919200560788347)(0.421369510223303, 0.919276276823699)
(0.416966797340692, 0.919316471110364)(0.412566294535714, 0.919320887671761)(0.408166309975953, 0.919289173013596)(0.403765149979327, 0.919220872334669)(0.399361116886032, 0.919115425099033)(0.394952507114785, 0.918972159885749)
(0.390537609444538, 0.918790288413015)(0.386114703562656, 0.918568898747866)(0.381682058928189, 0.918306947598515)(0.377237934017069, 0.918003251600881)(0.37278057603145, 0.917656477505031)(0.368308221172597, 0.917265131172717)
(0.36381909597889, 0.916827545395093)(0.359311419328125, 0.916341866172793)(0.354783405020475, 0.91580603735543)(0.350233265937412, 0.915217783665196)(0.345659219737937, 0.914574591899759)(0.34105949643348, 0.913873690183997)
(0.336432348220509, 0.913112025253429)(0.331776062967196, 0.912286238116485)(0.327088981865377, 0.911392637165998)(0.322369517854292, 0.910427168562558)(0.317616180569936, 0.909385384935798)(0.312827607521156, 0.908262413103531)
(0.308002606440958, 0.907052921608421)(0.303140198705584, 0.905751084578466)(0.298239673919762, 0.904350547612811)(0.293300663070422, 0.902844399340219)(0.288323217620451, 0.901225142693705)(0.283307899571732, 0.899484675290265)
(0.27825590242405, 0.897614285547005)(0.273169168071813, 0.895604657503811)(0.268050538641736, 0.893445909399103)(0.262903905830112, 0.891127652001534)(0.257734387837358, 0.888639098512079)(0.252548489169581, 0.885969213193545)(0.247354273358783, 0.883106927288482)
(0.242161498706844, 0.880041418223069)(0.236981711439202, 0.876762456184217)(0.231828279108925, 0.873260823028193)(0.226716323944218, 0.86952878334074)(0.221662537949702, 0.865560585085176)(0.216684862149643, 0.861352953187951)(0.211802022513855, 0.856905525158695)
(0.20703293792606, 0.852221173795012)(0.2023960361165, 0.847306164784334)(0.197908531761957, 0.842170109707589)(0.193585736833421, 0.836825701535747)(0.18944047725135, 0.831288248978238)(0.185482666360241, 0.825575046624288)(0.181719081144783, 0.819704664636117)
(0.178153341659994, 0.813696213592168)(0.174786071633717, 0.807568665119011)(0.171615206788143, 0.801340274197055)(0.168636392929707, 0.795028136812575)(0.165843429429444, 0.788647885122327)(0.163228713912239, 0.782213515862071)(0.160783653932181, 0.775737327188253)
(0.15849903539673, 0.769229954237125)(0.156365327600271, 0.762700450615234)(0.154372926861966, 0.75615641622202)(0.152512352245961, 0.749604152854475)(0.150774380175749, 0.743048809070264)(0.149150150179425, 0.73649454459703)(0.147631225718841, 0.729944665674682)
(0.146209638218091, 0.723401760221545)(0.144877906152952, 0.716867820184552)(0.143629039326221, 0.710344334539453)(0.142456538101941, 0.703832389928742)(0.141354379683345, 0.697332742918027)(0.140317001938581, 0.690845880808211)(0.139339286996771, 0.684372080409257)
(0.138416541413192, 0.67791145621147)(0.137544474719782, 0.671463991418457)(0.136719180359009, 0.665029568639479)(0.135937117267912, 0.658607996099072)(0.135195091354933, 0.652199029236081)(0.134490237942836, 0.645802381459248)(0.133820005088968, 0.639417735289128)
(0.133182138059558, 0.633044750095574)(0.132574665325299, 0.626683067876777)(0.131995884138377, 0.620332318229361)(0.131444347263686, 0.613992117914046)(0.130918850179691, 0.607662070154113)(0.130418418256887, 0.601341763045752)(0.129942295681965, 0.595030767486253)
(0.129489933398657, 0.588728634462777)(0.129060978247366, 0.582434891472551)(0.128655262738533, 0.576149039685547)(0.128272794152208, 0.569870550296849)(0.127913745482216, 0.563598859915711)(0.12757844578711, 0.557333367094935)(0.127267371763364, 0.551073428719002)
(0.126981140191676, 0.544818357049516)(0.126720500303212, 0.538567416720931)(0.12648632811664, 0.532319822228566)(0.126279620666413, 0.526074735778794)(0.126101491830823, 0.519831265337462)(0.125953169149781, 0.513588463228874)(0.125835991411295, 0.507345324915391)
(0.125751408045363, 0.501100788240991)(0.125700979185171, 0.494853732986695)(0.125686377503198, 0.488602980778785)(0.125709391239822, 0.482347295393624)(0.125771928756304, 0.476085383432174)(0.125876024650264, 0.469815895443095)(0.126023847493804, 0.463537427522535)
(0.126217709288014, 0.457248523474294)(0.12646007665413, 0.450947677634166)(0.126753584184621, 0.444633338474431)(0.127101049385208, 0.438303913153594)(0.12750549029526, 0.431957773272)(0.1279701450636, 0.42559326194581)(0.128498493885393, 0.419208702771906)(0.129094284081022, 0.412802410797152)
(0.129761557105484, 0.406372706097697)(0.130504679195192, 0.399917930464981)(0.131328375232045, 0.393436465964279)(0.132237764385687, 0.38692676166512)(0.133238399765096, 0.380387365303317)(0.134336310710235, 0.373816961253809)(0.135538048030774, 0.367214414776156)(0.13685073114053, 0.360578826835991)
(0.138282094816439, 0.353909605880459)(0.139840537175104, 0.347206548698531)(0.141535164944463, 0.340469934361365)(0.143375830339605, 0.333700648921074)(0.145373162464027, 0.326900316355933)(0.147538576514363, 0.320071467176906)(0.149884266200098, 0.313217720493625)
(0.152423156569312, 0.306344000691291)(0.155168816976158, 0.299456766021493)(0.158135309506731, 0.292564267867092)(0.161336966511829, 0.28567680362317)(0.16478807665293, 0.278806963941435)(0.168502466630005, 0.271969844647975)(0.172492972593805, 0.265183188507854)(0.176770807131398, 0.258467413857605)
(0.181344842154637, 0.251845486640896)(0.186220851351384, 0.245342592908093)(0.191400780322159, 0.238985581988368)(0.196882133161412, 0.232802174221085)(0.202657571781603, 0.226819967444234)(0.208714818121767, 0.221065312994573)(0.215036909473386, 0.215562176973856)(0.221602817538974, 0.210331109994374)
(0.22838837057628, 0.205388453819793)(0.23536737735136, 0.200745868430821)(0.242512818485396, 0.196410216976324)(0.249797972666583, 0.192383787040048)(0.257197373753511, 0.188664780794054)(0.264687536125196, 0.185247982000575)(0.272247430392623, 0.182125504291858)(0.27985872591649, 0.179287539280189)
(0.287505838799342, 0.176723045176547)(0.295175833665359, 0.174420341156254)(0.302858236396972, 0.17236759135154)(0.310544781691378, 0.170553181204641)(0.318229142325138, 0.16896599777135)(0.325906658711549, 0.167595622699429)(0.333574082707308, 0.166432462238067)(0.341229335252645, 0.165467824182893)
(0.348871304096029, 0.164693954905619)(0.356499658343791, 0.164104051577122)(0.364114687118326, 0.16369225548363)(0.371717164544196, 0.16345363305794)(0.379308234844941, 0.163384150898463)(0.386889312194408, 0.163480648219998)(0.394461994393358, 0.163740809126907)(0.402027987411536, 0.164163136530596)
(0.409589038070462, 0.164746928847944)(0.417146872374528, 0.165492260174579)(0.424703139560372, 0.166399964686374)(0.432259356533205, 0.167471624507226)(0.439816849446483, 0.168709560431283)(0.447376693791796, 0.170116825639062)(0.454939657745098, 0.171697203839508)(0.462506133490934, 0.173455204989766)
(0.470076056631836, 0.175396061416036)(0.477648834284232, 0.177525724181898)(0.485223239515731, 0.179850848604379)(0.492797319016562, 0.182378777626634)(0.500368265970332, 0.185117506601624)(0.507932298738952, 0.188075633759191)(0.515484520637555, 0.191262285721465)(0.523018768694591, 0.194687009209423)
(0.530527464459191, 0.19835962434653)(0.538001467294414, 0.20229002866584)(0.545429942570514, 0.206487942910262)(0.552800261890923, 0.210962592268847)(0.560097955632531, 0.215722319775668)(0.567306742566844, 0.220774135127229)(0.574408663933852, 0.226123211613264)(0.581384346909779, 0.231772354676064)(0.588213418278866, 0.23772148139117)(0.594875075645076, 0.24396716236065)
(0.601348805749675, 0.250502289589137)(0.607615209844872, 0.257315926341577)(0.613656875775315, 0.264393390196882)(0.619459219500213, 0.271716601514399)(0.625011201686433, 0.279264672363277)(0.630305847309235, 0.28701470729538)(0.635340510127537, 0.294942719345181)(0.640116873860371, 0.303024588646089)
(0.64464069590407, 0.311236945539549)(0.648921359921151, 0.319557923343787)(0.652971299300021, 0.327967730048386)(0.656805361071395, 0.336449018239154)(0.660440175012837, 0.344987067521878)(0.666660243293647, 0.360786724979879)};
\addlegendentry{$\Gamma$}
\end{axis}

\end{tikzpicture}

     \caption{Representation of $\overline{\Omega}_h$, $\Omega_h$ and $\Gamma$.}
    \label{fig:situation}
\end{figure}
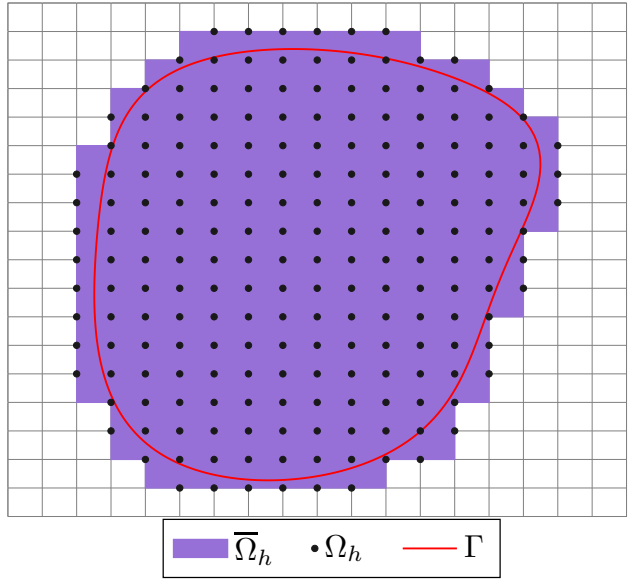

The discrete $L^2$-norm, $L^{\infty}$-norm and $H^1$-semi-norm are defined for all $v_h=(v_{\alpha})_{\alpha:x_{\alpha}\in\Omega_h^{\text{int}}}$ as
\begin{equation*}
\| v_h \|_{h,0} = \left( {h^n} \sum_{\scriptstyle \alpha:x_{\alpha} \in \Omega_h^{\text{int}}} v_{\alpha}^2  \right)^{1 / 2} ,~~~
\| v_h \|_{h,\infty} =\max_{\scriptstyle \alpha:x_{\alpha} \in \Omega_h^{\text{int}}} |v_{\alpha}|
\end{equation*}
and 
\begin{equation*}
|v_h|_{h,1}
= \left(\sum_{
\begin{array}{c}
\scriptstyle \alpha,d:x_{\alpha}\in\Omega^{\text{int}}\\ 
\scriptstyle\text{ and }x_{\alpha+d}\in \Omega^{\text{int}}
\end{array}
}{h^n}\left|\dfrac{v_{\alpha+d}-v_{\alpha}}{h}\right|^2
\right)^{1/2}.
\end{equation*}

Here and after, in the inequalities, $C$ will denote a constant independent on $h$ and $f$.

Let us first define the notion of regularity we will need on the domain:

\begin{definition}
We say that a domain $\Omega$ is $r$-smooth if for each point $x_0\in \Gamma$ there exists a cone centered at $x_0$ with an angle strictly greater than $\pi/2$ and a radius $r$ which is included in $\overline{\Omega}$.
\end{definition}

{Let us now state our convergence result:}

\begin{theorem}[Convergence]\label{theo} 
Suppose that $\Omega$ is $r$-smooth for a $r>0$ and is defined by a level-set function $\varphi\in\mathcal C^2(\overline{\Omega}_h)$ as in \eqref{eq:phi}.
Let $u$ be the solution of the continuous system \eqref{eq:poisson}.
Suppose that $u\in\mathcal C^4(\Omega)$.
For $\sigma,~\gamma$ large enough and $h<\dfrac{2r}{\sqrt{10}}$, the discrete system \eqref{eq:diff} admits a unique solution $u_h$. 
In this case, denoting by $U=(u(x_{\alpha}))_{\alpha:x_{\alpha}\in\Omega_h^{\text{int}}}$, one has
\begin{equation}
    \label{eq:est}
  \|U-u_h\|_{h,0}
+\|U-u_h\|_{h,\infty}
+|U-u_h|_{h,1}
+\leqslant Ch^{3/2}\|u\|_{\mathcal{C}^4(\Omega)}.  
\end{equation}
\end{theorem}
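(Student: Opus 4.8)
The plan is to read \eqref{eq:diff} as a coercive discrete variational problem and to prove convergence by the classical route stability $+$ consistency, rather than through a discrete maximum principle. I would first establish coercivity of $a_h$. A discrete summation by parts (Abel summation along each grid line) applied to $(-\Delta_h v_h,v_h)$ reproduces $|v_h|_{h,1}^2$ up to indefinite boundary remainders supported on the edges of $\Omega_h$ that leave $\Omega$. Since $b_h(v_h,v_h)$ and $j_h(v_h,v_h)$ are sums of squares, hence nonnegative, the role of taking $\sigma$ and $\gamma$ large is exactly to let these two terms dominate those remainders and yield, with $c>0$ independent of $h$,
\[ a_h(v_h,v_h)\geq c\left(|v_h|_{h,1}^2+b_h(v_h,v_h)+j_h(v_h,v_h)\right). \]
To turn this into control of the full $\|v_h\|_{h,0}$, and thus into existence and uniqueness for the square system \eqref{eq:diff}, I would prove a discrete Poincaré inequality $\|v_h\|_{h,0}\leq C\,a_h(v_h,v_h)^{1/2}$, in which $b_h$ plays the role of the homogeneous Dirichlet datum and $j_h$ transfers $\ell^2$-control from $\Omega_h^{\text{int}}$ to the extra layer $\Omega_h\setminus\Omega_h^{\text{int}}$. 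The hypotheses that $\Omega$ be $r$-smooth and that $h<2r/\sqrt{10}$ enter here: the cone at each boundary point then contains enough interior neighbours for the chain/summation argument behind the Poincaré inequality to close.

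Second, I would estimate the consistency functional $\mathcal E(v_h):=a_h(U,v_h)-l_h(v_h)$, where $U$ is the $\mathcal C^4$-extension of $u$ to $\overline\Omega_h$ whose norm is controlled by $\|u\|_{\mathcal C^4(\Omega)}$ (such an extension exists because $r$-smoothness makes $\partial\Omega$ regular enough). At interior nodes, Taylor expansion of the five-point stencil gives $|-\Delta_h U_\alpha-f_\alpha|\leq Ch^2\|u\|_{\mathcal C^4}$, so the bulk part of $\mathcal E$ is $O(h^2)$. For the penalization the decisive identity is that, writing $u=\varphi w$ near $\Gamma$ (legitimate since $u$ vanishes on $\{\varphi=0\}$), one has
\[ \varphi_{\alpha+d}U_\alpha-\varphi_\alpha U_{\alpha+d}=\varphi_\alpha\varphi_{\alpha+d}\,(w_\alpha-w_{\alpha+d}), \]
so each edge in $B$ produces only $O(h^3)$, while the weight obeys $\varphi_\alpha^2+\varphi_{\alpha+d}^2\geq c h^2$ because two nodes straddling $\Gamma$ cannot both lie within $h/2$ of it. The stabilization contribution $j_h(U,v_h)$ is handled through third-order differences of $u$.

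The step I expect to be the main obstacle is precisely the boundary part of $\mathcal E$. Bounding $b_h(U,v_h)$ and the summation-by-parts remainders term by term with Cauchy--Schwarz is too crude near $\Gamma$: it loses a full power of $h$ and would only deliver $O(h^{1/2})$. Instead one must exhibit the cancellation, analogous to the Nitsche/$\varphi$-FEM consistency mechanism, between the boundary fluxes generated by the summation by parts of $(-\Delta_h U,v_h)$ and the penalization $b_h(U,v_h)$: because $u$ solves $-\Delta u=f$ with $u=0$ on $\Gamma$, the leading boundary contributions annihilate, and what survives, once the worst-case cut-cell geometry permitted by $r$-smoothness is accounted for, is of size $Ch^{3/2}\|u\|_{\mathcal C^4(\Omega)}$. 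Making this rigorous --- identifying the exact discrete flux that pairs with $b_h$ and tracking the powers of $h$ through the cut geometry --- is the technical heart of the proof.

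Finally I would close the estimate. Since $u_h$ solves \eqref{eq:diff} and $a_h$ is bilinear, the error $e_h=U-u_h$ satisfies $a_h(e_h,e_h)=\mathcal E(e_h)$; combining coercivity with Young's inequality to absorb the $b_h(e_h,e_h)$ and $j_h(e_h,e_h)$ factors coming from $\mathcal E(e_h)$ yields $a_h(e_h,e_h)^{1/2}\leq Ch^{3/2}\|u\|_{\mathcal C^4(\Omega)}$. This gives the $|U-u_h|_{h,1}$ bound at once and, through the discrete Poincaré inequality, the $\|U-u_h\|_{h,0}$ bound. For $\|U-u_h\|_{h,\infty}$ I would argue separately: away from $\Gamma$ the scheme is the standard five-point Laplacian, an M-matrix, so a discrete maximum-principle (barrier) argument propagates the rate to the pointwise norm, the near-boundary and stabilization contributions being controlled by the energy estimate already obtained. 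Reconciling this pointwise bound with the non-M-matrix term $j_h$ is the second place demanding care; a discrete Sobolev inequality applied to $e_h$ furnishes a fallback, up to the usual logarithmic factor in two dimensions, completing \eqref{eq:est}.
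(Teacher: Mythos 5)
Your overall architecture (discrete coercivity, a discrete Poincar\'e inequality, an error equation, then consistency) is the same as the paper's, but the proposal breaks down at the two points where you depart from it, and both are genuine gaps rather than technicalities. First, the claim that the penalization weight satisfies $\varphi_\alpha^2+\varphi_{\alpha+d}^2\geq c h^2$ ``because two nodes straddling $\Gamma$ cannot both lie within $h/2$ of it'' is false. If $\Gamma$ crosses an edge, the two distances from the endpoints to $\Gamma$ sum to at most $h$, so both endpoints can be very close to $\Gamma$; near a point where $\Gamma$ is tangent to a grid direction, both endpoint values of $\varphi$ along a crossed edge can be of order $h^2$ (take $\varphi=y-x^2+h^2/2$ and the horizontal edge from $(0,0)$ to $(h,0)$: the values are $\pm h^2/2$), so the weight can degenerate to order $h^4$. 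No lower bound is needed at all: the correct estimate, which is the paper's, pairs one factor of $\varphi$ against the denominator, $|\varphi_\alpha\varphi_{\alpha+d}|/\sqrt{\varphi_\alpha^2+\varphi_{\alpha+d}^2}\leq\min\{|\varphi_\alpha|,|\varphi_{\alpha+d}|\}\leq Ch$, which together with $|w_\alpha-w_{\alpha+d}|\leq Ch\|w\|_{\mathcal{C}^1}$ for $w=\tilde u/\varphi$ (Sobolev plus Hardy) gives $O(h^2)$ per edge uniformly, including tangential cuts.

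Second, and more seriously, your self-identified ``technical heart'' --- a cancellation between summation-by-parts boundary fluxes of $(-\Delta_h U,v_h)$ and $b_h(U,v_h)$ --- is left unproven, and it is in fact neither available nor needed. In this scheme the discrete Laplacian equations are posed only at interior nodes, so the consistency functional contains no summation by parts and no boundary fluxes: its bulk part is estimated node by node by Taylor expansion, and the boundary parts $b_h(\tilde U,e_h)$ and $j_h(\tilde U,e_h)$ are estimated by exactly the ``crude'' Cauchy--Schwarz you reject, each yielding $O(h^{1/2})\,|||e_h|||_h$. This is sufficient because of a scaling point you miss: the coercivity norm is $|||v_h|||_h^2=h^{-2}|v_h|_{h,1,\Omega_h}^2+b_h(v_h,v_h)+j_h(v_h,v_h)$, i.e.\ it is stronger than the discrete $H^1$ seminorm by a factor $1/h$. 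Hence the energy bound $|||e_h|||_h\leq Ch^{1/2}\|u\|_{\mathcal{C}^4(\Omega)}$ already gives $|e_h|_{h,1}\leq h\,|||e_h|||_h\leq Ch^{3/2}\|u\|_{\mathcal{C}^4(\Omega)}$, and the Poincar\'e lemma (whose right-hand side carries the weight $h^3 b_h$) converts this into the $L^2$ bound; this scaling mismatch between the energy norm and the discrete $H^1$ norm is precisely the supraconvergence mechanism. Relatedly, your separate maximum-principle or discrete-Sobolev treatment of $\|U-u_h\|_{h,\infty}$ is a detour that either conflicts with the non-M-matrix term $j_h$ (as you concede) or loses a logarithmic factor; the paper's Poincar\'e-type lemma controls the sup-norm and the $L^2$ norm simultaneously by $|e_h|_{h,1}^2+h^3 b_h(e_h,e_h)$, with no extra argument. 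In short: your coercivity and error-equation steps align with the paper, but the consistency estimate --- the one step you flag as the main obstacle --- rests on a false geometric bound and an unproven cancellation, where the correct bookkeeping closes the proof with the tools you already listed.
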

{
\begin{remark}\label{rem:convergence}\begin{itemize}
    \item The $L^2$ convergence rate given in Theorem  \ref{theo} may not be optimal since we numerically observe a second-order rate and standard finite difference schemes are also of second order.  
    \item Concerning the $H^1$ error, the convergence order is larger than the FEM one. This phenomenon is well-known and called supraconvergence (see e.g. 
    \cite{ferreira1998supraconvergence}).
\end{itemize}

\end{remark}}

Moreover, the matrix associated to the discrete system is well-conditioned:
\begin{theorem}[Conditioning]\label{theo:cond}
Under the hypothesis of Theorem \ref{theo},
the condition number defined by $\kappa(A) :=
\|A\|_2\|A^{-1}\|_2$ of the matrix $A$ associated to the bilinear form $a_h$ satisfies
$$
\kappa(A) \leq Ch^{-2} .
$$
Here, $\|\cdot\|_2$ stands for the matrix norm associated with the Euclidian norm.
\end{theorem}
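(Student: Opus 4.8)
The plan is to bound the two factors $\|A\|_2$ and $\|A^{-1}\|_2$ separately, establishing $\|A\|_2 \leq Ch^{-2}$ and $\|A^{-1}\|_2 \leq C$ with constants independent of $h$. Since the bilinear form $a_h$ is not symmetric (the discrete Laplacian is summed only over $\Omega_h^{\text{int}}$, so the edges leaving the physical region are treated one-sidedly), I will not argue through eigenvalues but exploit coercivity instead: writing $v$ for the coefficient vector of $v_h$ over $\Omega_h$, one has $a_h(v_h,v_h)=v^{T}Av$, and an estimate $a_h(v_h,v_h)\geq c_0\|v\|_2^2$ yields, via $c_0\|v\|_2^2\leq v^{T}Av\leq \|v\|_2\|Av\|_2$, the bound $\|Av\|_2\geq c_0\|v\|_2$, hence $\|A^{-1}\|_2\leq 1/c_0$.

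For the upper bound on $\|A\|_2$ I would use the elementary estimate $\|A\|_2\leq\sqrt{\|A\|_1\,\|A\|_\infty}$ together with the sparsity of $A$. Each of the three contributions to $a_h$ couples only a fixed, dimension-dependent number of neighbouring nodes, so every row and every column of $A$ has $O(1)$ nonzero entries. It then suffices to check that each entry is $O(h^{-2})$: this is immediate for the discrete Laplacian; for the penalization it follows from $\varphi_{\alpha}^2/(\varphi_\alpha^2+\varphi_{\alpha+d}^2)\leq 1$ (and symmetrically), which caps the prefactor $\gamma/(2h^2)$; and for the stabilization the second differences divided by $h$ produce entries of size $O(\sigma/h^2)$. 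Summing the $O(1)$ entries per row and column gives $\|A\|_1,\|A\|_\infty\leq Ch^{-2}$, hence $\|A\|_2\leq Ch^{-2}$.

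The lower bound is the heart of the matter, and it is exactly the coercivity already required for the well-posedness part of Theorem~\ref{theo}. Summation by parts rewrites the discrete Laplacian contribution in terms of the discrete $H^1$-seminorm $\frac{1}{h^n}|v_h|_{h,1}^2$, up to cut-edge terms that are absorbed by the penalization; together with the nonnegative terms $b_h(v_h,v_h)$ and $j_h(v_h,v_h)$, this bounds $a_h(v_h,v_h)$ from below by $\frac{1}{h^n}|v_h|_{h,1}^2$ plus boundary and stabilization contributions. A discrete Poincaré inequality on the bounded domain, with constant depending only on $\operatorname{diam}(\Omega)$ and hence independent of $h$, converts $\frac{1}{h^n}|v_h|_{h,1}^2$ into control of $\frac{1}{h^n}\|v_h\|_{h,0}^2=\sum_{\alpha:x_\alpha\in\Omega_h^{\text{int}}} v_\alpha^2$, i.e.\ of all interior degrees of freedom. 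The remaining nodes of $\Omega_h\setminus\Omega_h^{\text{int}}$, which form a thin extension layer, are controlled by combining $b_h$ with the stabilization $j_h$, the latter acting as a ghost-penalty-type term propagating the control from the physical region across the cut edges, while the $r$-smoothness rules out degenerate configurations in which an exterior node would be poorly connected to $\Omega$. For $\sigma,\gamma$ large enough this yields $a_h(v_h,v_h)\geq c_0\sum_{\alpha:x_\alpha\in\Omega_h} v_\alpha^2=c_0\|v\|_2^2$ with $c_0>0$ independent of $h$.

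Combining the two estimates gives $\kappa(A)=\|A\|_2\|A^{-1}\|_2\leq Ch^{-2}$. The main obstacle is the uniform-in-$h$ coercivity over the \emph{full} set of degrees of freedom of $\Omega_h$: the interior estimate is the standard uniform discrete Poincaré inequality, but showing that the near-boundary layer is controlled with an $h$-independent constant requires quantifying how $b_h$ and $j_h$ interact on the cut edges, and it is precisely here that the $r$-smoothness assumption and the largeness of $\sigma,\gamma$ are genuinely used. Since this stability is exactly what underlies the existence and uniqueness asserted in Theorem~\ref{theo}, I would reuse that estimate rather than reprove it.
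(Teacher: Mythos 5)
Your proposal is correct, and it splits the work exactly as the paper does: a coercivity-based bound for $\|A^{-1}\|_2$ and a boundedness estimate for $\|A\|_2$. The lower bound is literally the paper's argument: the paper invokes Proposition~\ref{prop:coer} together with Lemma~\ref{lemma2} (whose proof controls the layer of nodes of $\Omega_h$ lying outside $\Omega$ through the penalization $b_h$) to obtain $a_h(v_h,v_h)\geq C\sum_{\alpha:x_\alpha\in\Omega_h}v_\alpha^2$, which is precisely the stability estimate you propose to reuse as a black box; your passage from it to $\|A^{-1}\|_2\leq 1/c_0$ via $c_0\|v\|_2^2\leq v^TAv\leq\|v\|_2\|Av\|_2$ is the standard and correct way to exploit coercivity for a non-symmetric matrix. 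Where you genuinely diverge is the upper bound: the paper simply asserts $a_h(v_h,v_h)\leq \frac{C}{h^2}\sum_{\alpha}v_\alpha^2$ ``thanks to the expression of $a_h$'' and concludes, whereas you bound $\|A\|_2\leq\sqrt{\|A\|_1\|A\|_\infty}$ using the $O(1)$ sparsity per row and column and the $O(h^{-2})$ size of every entry (for the penalization cross terms you need $|\varphi_\alpha\varphi_{\alpha+d}|\leq\frac12(\varphi_\alpha^2+\varphi_{\alpha+d}^2)$ in addition to the ratio bound you state, but that is elementary). Your route buys something real here: since $A$ is not symmetric --- the paper itself notes this when explaining why plain conjugate gradient cannot be used --- a bound on the quadratic form $v^TAv$ controls only the symmetric part of $A$ and does not by itself bound $\|A\|_2$ (an antisymmetric matrix has vanishing quadratic form but nonzero norm). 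The paper's step is therefore best read as the Cauchy--Schwarz bound on the full bilinear form, $|a_h(u_h,v_h)|\leq \frac{C}{h^2}\|u\|_2\|v\|_2$, which does yield $\|A\|_2\leq Ch^{-2}$; your sparsity argument reaches the same conclusion while sidestepping the non-symmetry subtlety entirely, at the cost of a slightly longer entry-by-entry verification. One immaterial inaccuracy: in the paper's estimates the exterior layer is handled by $b_h$ together with the discrete $H^1$-seminorm over cut edges in Lemma~\ref{lemma2}, while $j_h$ and the $r$-smoothness intervene in Proposition~\ref{prop:coer}; your attribution of the layer control to ``$b_h$ combined with $j_h$'' is a harmless re-description since you reuse the combined estimate anyway.
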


These two theorems are proved in Section \ref{sec:prove}.

{
\begin{remark}
\begin{itemize}
    \item 
    {The following proofs are given in 2D for readability in Section \ref{sec:prove} but can be done in the same way in 3D by adding corresponding indices.} 
    \item In the case of non-homogeneous Dirichlet boundary conditions $u_h^D=(u_{\alpha}^D)_{\alpha}$, one needs to add the following term in the right-hand side:
    \begin{equation*}
{b_h^{rsh}(v_h)} =\frac{\gamma}{2 h^2}\left(\sum_{(\alpha,d) {\in B}} 
\frac{1}{\varphi_{\alpha}^2 + \varphi_{\alpha+d}^2}
(\varphi_{\alpha+d} u_{\alpha}^D - \varphi_{\alpha} u_{\alpha+d}^D) (\varphi_{\alpha+d}
  v_{\alpha} - \varphi_{\alpha} v_{\alpha+d})\right).
  \end{equation*} 
\end{itemize}
\end{remark}}

\section{Link with \texorpdfstring{$\varphi -$}{test}FEM}\label{sec:link}
{Consider $\mathcal{T}_h^{\mathcal{O}}$ a Cartesian triangular {(or tetrahedral in 3D)} mesh of $\mathcal{O}$ with nodes $(x_{\alpha})$, $\mathcal{T}_h$ the set of cells belonging to $\mathcal{T}_h^{\mathcal{O}}$ and intersecting $\Omega$, $\Omega_h^{EF}$ the domain covered by the mesh $\mathcal{T}_h$ and $\partial\Omega_h^{EF}$ its boundary. 
Let $\mathcal{E}_h^{\Gamma}$ the set of edges belonging to $\mathcal{T}_h$ cut by $\Gamma$ and
 { $\mathcal{F}_h^\Gamma$} the set of internal edges belonging to a cell of $\mathcal{T}_h$ cut by $\Gamma$. 
We define
$$V_h=\{v_h\in C^0(\Omega_h):v_{h|K}\in\mathbb{P}_1(K)~\forall K\in \mathcal{T}_h\}$$
and
{$$Q_h=\{p_h\in L^2(E_h^\Gamma):p_{h|K}\in\mathbb{P}_0(E)~\forall E
  \in \mathcal{E}_h^{\Gamma}\},$$
where $E_h^\Gamma=\cup_{E \in \mathcal{E}_h^{\Gamma}} E$.}

Consider the following $\varphi$-FEM scheme for \eqref{eq:poisson}: Find $(u_h,p_h)\in V_h\times Q_h$

\begin{equation}
  \label{phiFEMmod} \int_{\Omega_h} \nabla u_h \cdot \nabla v_h -
  \int_{\partial \Omega_h} \nabla u_h \cdot nv_h + \frac{\gamma}{h } \sum_{E
  \in \mathcal{E}_h^{\Gamma}} \int_E (u_h - \varphi_h p_h) (v_h - \varphi_h q_h)
\end{equation}
\[ + \sigma h \sum_{F \in \mathcal{F}_h^{\Gamma}} \int_F [n \cdot \nabla
   u_h] [n \cdot \nabla v_h] = \int_{\Omega_h} fv_h, \quad \forall v_h, q_h \in V_h\times Q_h.\]
   This $\varphi$-FEM scheme is a variant of the one given in  \cite{cotin2023phi}.
We impose here $u_h \sim \varphi_h p_h$ by penalization on the edges $E \in
\mathcal{E}_h^{\Gamma}$. 
The solution $u_h$ is represented by its values $u_{\alpha}$ at the nodes $x_{\alpha}$. 
If $x_{\alpha}$ is inside $\Omega$ together with all its  neighbors,
then \eqref{phiFEMmod} gives (after division by $h^2$ and some quadrature) the
usual discretization
{\begin{equation}
  \label{FDin} \sum_{d\in D}\frac{- u_{\alpha-d}+2 u_{\alpha}  - u_{\alpha+d}}{h^2} = f_{\alpha}.
\end{equation}}
Like this, we have the equations at the interior nodes, but the active
unknowns are also at the nodes outside $\Omega$ but adjacent to an inside
node.
If $v_h$ is a basis function attached to such a node, then we
simply ignore the contribution $\int_{\Omega_h} \nabla u_h \cdot \nabla v_h -
\int_{\partial \Omega_h} \nabla u_h \cdot nv_h$ (and also $\int_{\Omega_h}
fv_h$ on the right-hand side). On the contrary, we want to keep the equations coming from 
\begin{equation}
  \label{gammaterm} \frac{\gamma}{h^3} \sum_{E \in \mathcal{E}_h^{\Gamma}}
  \int_E (u_h - \varphi_h p_h) (v_h - \varphi_h q_h),
\end{equation}
which we have divided by $h^2$ to be consistent with (\ref{FDin}). For any $E
\in \mathcal{E}_h^{\Gamma}$, $p_h$ and $q_h$ on $E$ are just some numbers, say
$p_E$ and $q_E$. Taking $v_h = 0$ in (\ref{gammaterm}) gives
\[ \int_E (u_h - \varphi_h p_E) \varphi_h = 0, \]
thus
\[ p_E = \frac{\int_E u_h \varphi_h}{\int_E \varphi_h^2}. \]
We can now take $q_h = 0$ and exclude $p_h$ from (\ref{gammaterm}), which
becomes
\begin{equation}
  \label{gammatSimp} \frac{\gamma}{{h^3} } \sum_{E \in \mathcal{E}_h^{\Gamma}}
  \int_E \left( u_h - \frac{\int_E u_h \varphi_h}{\int_E \varphi_h^2} \varphi_h \right)
  v_h.
\end{equation}
Let us work out this term in the case when $E \in \mathcal{E}_h^{\Gamma}$ is
an edge from $x_{\alpha}$ to $x_{\alpha+d}$,  {with $x_\alpha$ inside $\Omega$ and $x_{\alpha +d}$ outside and $d\in D$.} In this case
\[ u_h - \frac{\int_E u_h \varphi_h}{\int_E \varphi_h^2} \varphi_h =
   \left\{\begin{array}{l}
     \frac{\varphi_{{\alpha}+d}}{\varphi_{{\alpha}}^2 + \varphi_{{\alpha}+d}^2} (\varphi_{{\alpha}+d}
     u_{{\alpha}} - \varphi_{{\alpha}} u_{{\alpha}+d}) \text{ at } x_{{\alpha}},\\
     \frac{\varphi_{{\alpha}}}{\varphi_{{\alpha}}^2 + \varphi_{{\alpha+d}}^2} (\varphi_{{\alpha}} u_{\alpha+d}
     - \varphi_{\alpha+d} u_{{\alpha}}) \text{ at } x_{\alpha+d}
   \end{array}\right. ,\]
so the contribution to (\ref{gammatSimp}) on this edge $E$ is given by
\begin{equation*}
 \frac{\gamma}{2 h^2} \frac{1}{\varphi_{\alpha}^2 + \varphi_{\alpha +d}^2} (\varphi_{{\alpha}+d}
     u_{{\alpha}} - \varphi_{{\alpha}} u_{{\alpha}+d}) (\varphi_{{\alpha}+d}
     v_{{\alpha}} - \varphi_{{\alpha}} v_{{\alpha}+d}),
\end{equation*}
which is of the same order as the penalization term $b_h$.

Similar formulas hold for other configurations of edges $E \in
\mathcal{E}_h^{\Gamma}$. This gives the matrix representing (\ref{gammaterm}),
which should be added to the matrix representing (\ref{FDin}).

Finally, the ghost penalty term
\begin{equation}
  \label{ghost} \sigma h\sum_{F \in \mathcal{F}_h^{\Gamma}} \int_F [n \cdot
  \nabla u_h] [n \cdot \nabla v_h],
\end{equation}
{which will also be divided by }
$h^2$ can also be approximated in a Finite Difference manner.
Take again a node $x_{\alpha}$ inside $\Omega$ such that $x_{\alpha+d}$ is
outside $\Omega$ {with $d\in D$.}
{Then the two edges $(x_{\alpha-d}-x_{\alpha}$ and $(x_{\alpha}-x_{\alpha+d})$ adjacent to $x_{\alpha}$ are in $\mathcal{F}_h^{\Gamma}$}
and the above contributions on these edges can be
approximated as
\begin{equation*}
  \sigma \frac{- u_{\alpha -d} + 2 u_{\alpha} - u_{\alpha+d}}{h }
  \times \frac{- v_{\alpha-d} + 2 v_{\alpha} - v_{\alpha+d}}{h }.
\end{equation*}

\section{Proof of Theorems \ref{theo} and \ref{theo:cond}}\label{sec:prove}

Most studies in the literature \cite{LiEF,jovanovic2013analysis} analyze the finite difference methods using the formalism of finite elements or finite volumes \cite{Johansen}  on elliptic problems. {We will follow here the finite element formalism.}

Let us introduce the following discrete $L^2$-norm, $L^{\infty}$-norm and $H^1$-semi-norm on $\Omega_h$ defined for all $v_h=(v_{\alpha})_{\alpha:x_{\alpha}\in \Omega_h}$ as
\begin{equation*}
\| v_h \|_{h,0,\Omega_h} = \left( h^2 \sum_{\scriptstyle \alpha:x_{\alpha} \in \Omega_h} v_{\alpha}^2  \right)^{1 / 2} ,~~~
\| v_h \|_{h,\infty,\Omega_h} =\max_{\scriptstyle \alpha:x_{\alpha} \in \Omega_h} |v_{\alpha}|
\end{equation*}
and 
\begin{equation*}
|v_h|_{h,1,\Omega_h}
= \left(\sum_{
\begin{array}{c}
\scriptstyle \alpha,d:x_{\alpha}\in\Omega\\ 
\scriptstyle\text{ or }x_{\alpha+d}\in \Omega
\end{array}
}h^2\left|\dfrac{v_{\alpha+d}-v_{\alpha}}{h}\right|^2
\right)^{1/2}.
\end{equation*}

\color{black}

{
We will focus here on the 2D case, but the reader will see that the other situation can be treated similarly. 
{In this case}, the problem can be rewritten as follows: find a discrete function $u_h=(u_{ij})_{ij}$ defined on $\Omega_h$ such that}
\begin{equation*}
a_h(u_h,v_h)=l_h(v_h),
\end{equation*}
{for all discrete function $v_h=(v_{ij})_{ij}$ defined on $\Omega_h$,} 
where
$$
a_h(u_h,v_h)=(-\Delta_h u_h,v_h)+b_h(u_h,v_h)+j_h(u_h,v_h)\,,
$$
and 
$$
l_h(v_h)=\sum_{i,j} f_{ij}v_{ij}\,,
$$
with
    the discrete Laplacian:
{\begin{equation*}
(-\Delta_h u_h,v_h)=\sum_{\scriptstyle i,j|(x_i,y_j)\in \Omega}\frac{4 u_{ij} - u_{i - 1, j} - u_{i + 1, j} - u_{i, j - 1} -
  u_{i, j + 1}}{h^2}v_{ij},
  \end{equation*}}
a penalization for the boundary conditions
\begin{multline*}
b_h(u_h,v_h) =\frac{\gamma}{ h^2}\left(\sum_{
(i,j)\in B_x} 
\frac{1}{\varphi_{ij}^2 + \varphi_{i + 1,  j}^2}
(\varphi_{i + 1, j} u_{ij} - \varphi_{ij} u_{i + 1, j}) (\varphi_{i + 1, j}
  v_{ij} - \varphi_{ij} v_{i + 1, j})\right. \\
 \left. 
 +\sum_{(i,j)\in B_y} \frac{1}{\varphi_{ij}^2+ \varphi_{i ,  j+1}^2}
 (\varphi_{i , j+1} u_{ij} - \varphi_{ij} u_{i, j+1}) (\varphi_{i , j+1}
  v_{ij} - \varphi_{ij} v_{i, j+1})\right)
  \end{multline*} 
 and a stabilization term near the boundary
\begin{multline*}
j_h(u_h,v_h)= \sigma\bigg( \sum_{(i,j)\in J_x}\frac{- u_{i - 1, j} + 2 u_{ij} - u_{i + 1, j}}{h }
  \times \frac{- v_{i - 1, j} + 2 v_{ij} - v_{i + 1, j}}{h }
  \\
 +\sum_{(i,j)\in J_y} \frac{- u_{i , j- 1} + 2 u_{ij} - u_{i , j+ 1}}{h }
  \times \frac{- v_{i , j- 1} + 2 v_{ij} - v_{i , j+ 1}}{h }
  \bigg)
\end{multline*}
{
  with $\gamma,~\sigma>0$ and
  $$B_x=\{(i,j)|\text{ the edge }(x_i, y_j) - (x_{i + 1}, y_j)\text{   intersects }\Gamma\text{ and not included in }\Gamma\},$$
    $$B_y=\{(i,j)|\text{ the edge }(x_i, y_j) - (x_{i}, y_{j+1})\text{   intersects }\Gamma\text{ and not included in }\Gamma\},$$
  $$J_x=\{(i,j)|(x_i, y_j)\in \Omega\text{ and }[(x_{i - 1}, y_j)\not\in \Omega \text{   or }(x_{i + 1}, y_j)\not\in \Omega]\},$$
  and
  $$J_y=\{(i,j)|(x_i, y_j)\in \Omega\text{ and }[(x_{i }, y_{j-1})\not\in \Omega \text{   or }(x_{i }, y_{j+1})\not\in \Omega]\}.$$
    }

We now give some intermediate results, which will be used to prove Theorems \ref{theo} and \ref{theo:cond}.

The first one is an adaptation of Lemma 3.3 in  \cite{duprez}, which will be central in the proof of the convergence.

\begin{lemma}\label{lemma:magic}
There exist  $\alpha_1\in(0,1)$, $\alpha_2\in(0,1/2)$ and $\beta>0$ such that
$$\left|\frac{u_{1}-u_0}{h}\right|^2
\leqslant \alpha_1
\left|\frac{u_{1}-u_0}{h}\right|^2
+\alpha_2\left|\frac{u_{2}-u_{1}}{h}\right|^2
+\beta \left|\frac{u_{0}-2u_1+u_{2}}{h}\right|^2$$
for all $u_{0},u_1,u_{2}\in\mathbb{R}$.

\end{lemma}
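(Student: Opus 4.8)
The plan is to reduce the statement to an elementary quadratic inequality in two real variables and then simply exhibit admissible constants. Writing $a=(u_1-u_0)/h$ and $b=(u_2-u_1)/h$, the key algebraic observation is that the second difference, divided by $h$ (note the denominator is $h$ and not $h^2$), is exactly the difference of the two first differences:
\[
\frac{u_0-2u_1+u_2}{h}=\frac{(u_2-u_1)-(u_1-u_0)}{h}=b-a.
\]
Hence the claimed inequality is equivalent to
\[
(1-\alpha_1)\,a^2\leqslant \alpha_2\,b^2+\beta\,(b-a)^2\qquad\text{for all }a,b\in\mathbb{R},
\]
and it suffices to find $\alpha_1\in(0,1)$, $\alpha_2\in(0,1/2)$, $\beta>0$ for which this holds uniformly in $a,b$.

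Next I would bound the left-hand side with the elementary inequality $(x-y)^2\leqslant 2x^2+2y^2$ applied to the decomposition $a=b-(b-a)$, which yields
\[
a^2\leqslant 2\,b^2+2\,(b-a)^2.
\]
Multiplying by $1-\alpha_1>0$ then reduces the whole problem to matching coefficients: it is enough to guarantee $2(1-\alpha_1)\leqslant\alpha_2$ and $2(1-\alpha_1)\leqslant\beta$.

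Finally I would fix the constants explicitly. Choosing any $\alpha_1\in(3/4,1)$ makes $2(1-\alpha_1)\in(0,1/2)$, so that setting $\alpha_2=\beta=2(1-\alpha_1)$ gives $\alpha_1\in(0,1)$, $\alpha_2\in(0,1/2)$ and $\beta>0$ as required, with both coefficient conditions holding with equality. This closes the argument.

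As for difficulty, there is essentially no obstacle: the only point demanding care is the bookkeeping of which difference quotient plays which role, that is, correctly identifying $(u_0-2u_1+u_2)/h$ as the \emph{first-order} quantity $b-a$ rather than as a second-order term, since here it is divided by $h$ and not $h^2$. One could alternatively phrase the claim as the negative semidefiniteness of the symmetric quadratic form $(1-\alpha_1-\beta)\,a^2+2\beta\,ab-(\alpha_2+\beta)\,b^2$ and verify the sign of its leading coefficient and determinant, but the Young-type splitting above is the most economical route and makes the admissible ranges of $\alpha_1,\alpha_2,\beta$ transparent.
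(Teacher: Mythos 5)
Your proof is correct. Both you and the paper reduce the claim to the same two-variable inequality in $a=(u_1-u_0)/h$ and $b=(u_2-u_1)/h$, via the identity $(u_0-2u_1+u_2)/h=b-a$, but the mechanics differ. The paper starts from the triangle inequality $|a|\le|a-b|+|b|$, so $a^2\le|a|(|a-b|+|b|)$, and applies Young's inequality twice with parameters $\varepsilon=3/4$ and $\delta=1/6$; this keeps a multiple of $a^2$ on the right-hand side (the term $\frac{1}{2\varepsilon}a^2$, which becomes $\alpha_1 a^2=\frac{2}{3}a^2$) and produces the concrete triple $(\alpha_1,\alpha_2,\beta)=(2/3,\,7/16,\,21/8)$. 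You instead eliminate any self-referential term by bounding $a^2\le 2b^2+2(b-a)^2$ outright, and then exploit the slack in $\alpha_1$: multiplying by $1-\alpha_1$ and choosing $\alpha_1\in(3/4,1)$ with $\alpha_2=\beta=2(1-\alpha_1)$ satisfies all the constraints, in particular $\alpha_2<1/2$, which is the one that actually matters downstream (the coercivity proof of Proposition \ref{prop:coer} sets $\alpha=\max\{\alpha_1,2\alpha_2\}$ and needs $\alpha<1$). Your route is shorter and makes the admissible range of constants transparent (a one-parameter family rather than a single numerical triple), while the paper's two-parameter Young argument buys no extra generality here; its style simply mirrors the $\varepsilon$-manipulations reused later in the coercivity estimate. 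Both arguments are complete.
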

\begin{proof}
For all $a,~b\in \mathbb{R}$ and  $\varepsilon,~\delta>0$, it holds 
\begin{align*}
a^2&\leq|a|(|a-b|+|b|)
\leq \dfrac{1}{2\varepsilon}a^2+\dfrac{\varepsilon}{2}(|a-b|+|b|)^2\smallskip\\
&\leq \dfrac{1}{2\varepsilon}a^2+\dfrac{\varepsilon}{2}b^2+\varepsilon|a-b||b|+\dfrac{\varepsilon}{2}(a-b)^2\smallskip\\
&\leq \dfrac{1}{2\varepsilon}a^2+\dfrac{\varepsilon}{2}(1+\delta)b^2+\dfrac{\varepsilon}{2}(1+\dfrac{1}{\delta})(a-b)^2.
\end{align*}
For $\varepsilon=\dfrac34$ and $\delta = \dfrac16$, we have
$$a^2\leq \dfrac{2}{3}a^2+\dfrac{7}{16}b^2+\dfrac{\varepsilon}{2}(1+\dfrac{1}{\delta})(a-b)^2,
$$
which leads to the conclusion.
\end{proof}

\begin{lemma}\label{lemma:magic2}
For all $\beta>0$, there exists  $\alpha\in(0,1)$ such that
for all $u_{ij}\in\mathbb{R}$ 
\begin{multline}\label{eq:ine magic 1}
\left|\frac{u_{10}-u_{00}}{h}\right|^2+\left|\frac{u_{20}-u_{10}}{h}\right|^2
\leqslant \alpha\left(
\left|\frac{u_{10}-u_{00}}{h}\right|^2+\left|\frac{u_{20}-u_{10}}{h}\right|^2
\right)\\
+\beta\left(
\left|\frac{u_{11}-u_{01}}{h}\right|^2
+\left|\frac{u_{11}-u_{10}}{h}\right|^2
+\left|\frac{u_{01}-u_{02}}{h}\right|^2\right.\\\left.
+ \left|\frac{u_{00}-2u_{10}+u_{20}}{h}\right|^2
+ \left|\frac{u_{00}-2u_{01}+u_{02}}{h}\right|^2\right).
\end{multline}
\end{lemma}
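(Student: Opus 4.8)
The plan is to read \eqref{eq:ine magic 1} as an \emph{absorption} estimate. Writing $A=\left|\frac{u_{10}-u_{00}}{h}\right|^2+\left|\frac{u_{20}-u_{10}}{h}\right|^2$ for the quantity on the left and $B$ for the bracketed five-term sum that multiplies $\beta$, the asserted inequality $A\leqslant \alpha A+\beta B$ is equivalent to $(1-\alpha)A\leqslant \beta B$. Hence it suffices to exhibit one universal constant $C>0$, independent of $h$ and of the $u_{ij}$, such that $A\leqslant C B$; given such a $C$, for any prescribed $\beta>0$ I would simply choose $\alpha\in(0,1)$ with $(1-\alpha)C\leqslant\beta$, for instance $\alpha=\max\{\tfrac12,\,1-\beta/C\}$, and \eqref{eq:ine magic 1} follows at once. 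Before proving $A\leqslant CB$ I note two structural facts that make $C$ legitimately $h$-independent: every term of $A$ and of $B$ carries the same prefactor $1/h^2$, and both $A$ and $B$ depend only on differences of the $u_{ij}$, so the ratio $A/B$ is invariant under $u_{ij}\mapsto\lambda u_{ij}$ and under adding a common constant to all nodes.

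To produce $C$ explicitly I would use a short telescoping-path identity. Set $p=\frac{u_{10}-u_{00}}{h}$, $q=\frac{u_{20}-u_{10}}{h}$, $r=\frac{u_{11}-u_{01}}{h}$, $s=\frac{u_{11}-u_{10}}{h}$, $t=\frac{u_{01}-u_{02}}{h}$, $S_1=\frac{u_{00}-2u_{10}+u_{20}}{h}$ and $S_2=\frac{u_{00}-2u_{01}+u_{02}}{h}$, so that $B=r^2+s^2+t^2+S_1^2+S_2^2$ and $A=p^2+q^2$. Two elementary identities hold: first $S_1=q-p$, which is immediate; second, walking around the unit square $x_{00}\to x_{01}\to x_{11}\to x_{10}$ and rewriting the edge $u_{01}-u_{00}$ through the vertical second difference via $u_{01}-u_{00}=-(u_{01}-u_{02})-(u_{00}-2u_{01}+u_{02})$, one gets $p=r-s-t-S_2$. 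The Cauchy--Schwarz inequality applied to the latter gives $p^2\leqslant 4(r^2+s^2+t^2+S_2^2)$, while $S_1=q-p$ gives $q^2\leqslant 2p^2+2S_1^2$. Adding, $A=p^2+q^2\leqslant 3p^2+2S_1^2\leqslant 12\,(r^2+s^2+t^2+S_1^2+S_2^2)=12\,B$, so $C=12$ is admissible.

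The single delicate step is the identity $p=r-s-t-S_2$: the only horizontal difference available in $B$ lives on the upper edge $x_{01}x_{11}$, so in order to reach the bottom edge $x_{00}x_{10}$ one must route through the two vertical edges $x_{00}x_{01}$ and $x_{10}x_{11}$, and the vertical difference $u_{01}-u_{00}$ is \emph{not} itself a control quantity --- it has to be absorbed into the prescribed vertical second difference $S_2$ and the vertical difference $t$. This is exactly the mechanism of Lemma~\ref{lemma:magic} (a first difference controlled by a neighbouring first difference plus a second difference), here applied along a two-dimensional path. I expect no further obstruction; a fully non-constructive alternative would be to observe that $B(u)=0$ forces $r=s=t=S_1=S_2=0$, whence $p=q=0$ and $A(u)=0$, so by the homogeneity and translation invariance noted above $C:=\sup\{A/B:B\neq0\}$ is finite, which again yields $A\leqslant CB$.
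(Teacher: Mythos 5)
Your proof is correct, and it takes a genuinely different route from the paper's. You reduce the lemma to the single inequality $A\leqslant CB$ (with $A=p^2+q^2$, $B=r^2+s^2+t^2+S_1^2+S_2^2$ in your notation) and prove it constructively: the two identities $S_1=q-p$ and $p=r-s-t-S_2$ both check out (the second follows from telescoping $u_{10}-u_{00}=(u_{10}-u_{11})+(u_{11}-u_{01})+(u_{01}-u_{00})$ together with $u_{01}-u_{00}=-th-S_2h$), Cauchy--Schwarz then gives $p^2\leqslant 4(r^2+t^2+s^2+S_2^2)$ and $q^2\leqslant 2p^2+2S_1^2$, hence $A\leqslant 12B$, and the choice $\alpha=\max\{\tfrac12,\,1-\beta/12\}$ indeed yields \eqref{eq:ine magic 1}. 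The paper argues instead by a compactness/variational method: it defines $\alpha$ as the supremum of the ratio (right-hand side minus the second-difference terms over the left-hand side plus first-difference terms), normalizes so that the denominator equals one and the $u_{ij}$ have zero mean, shows the resulting constraint set is bounded and closed so the supremum is attained, and then rules out $\alpha=1$ by exactly the chain of vanishing-difference implications that underlies your telescoping path ($u_{11}=u_{01}=u_{10}=u_{02}$, then $u_{00}=u_{10}=u_{20}$, contradicting the normalization). The trade-off is clear: your argument is shorter, fully elementary, and produces explicit constants ($C=12$ and an explicit $\alpha(\beta)$), whereas the paper's argument is non-constructive but transfers verbatim to other node configurations where an explicit telescoping path may be awkward to exhibit; indeed, the ``fully non-constructive alternative'' you sketch at the end (kernel inclusion of the two quadratic forms plus scaling and translation invariance) is essentially the paper's proof, modulo spelling out the compactness step that makes the supremum finite and attained.
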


\begin{proof}{
We only need to prove that for all $\beta>0$, there exists  $\alpha\in(0,1)$ such that
for all $u_{ij}\in\mathbb{R}$
\begin{multline}\label{eq:ine magic 2}
\left|\frac{u_{10}-u_{00}}{h}\right|^2+\left|\frac{u_{20}-u_{10}}{h}\right|^2
\leqslant \alpha\left(
\left|\frac{u_{10}-u_{00}}{h}\right|^2+\left|\frac{u_{20}-u_{10}}{h}\right|^2
\right)\\
+\beta\left(
\alpha\left|\frac{u_{11}-u_{01}}{h}\right|^2
+\alpha\left|\frac{u_{11}-u_{10}}{h}\right|^2
+\alpha\left|\frac{u_{01}-u_{02}}{h}\right|^2\right.\\\left.
+ \left|\frac{u_{00}-2u_{10}+u_{20}}{h}\right|^2
+ \left|\frac{u_{00}-2u_{01}+u_{02}}{h}\right|^2\right).
\end{multline}
Indeed, the right-hand side of \eqref{eq:ine magic 2} is smaller than the right-hand side of \eqref{eq:ine magic 1}.
    Consider 
 \begin{equation}
     \label{eq:sup alpha}
 \alpha =\sup \dfrac{
  \left|u_{10}-u_{00}\right|^2
  +\left|u_{20}-u_{10}\right|^2
 -\beta\left(
 \left|u_{00}-2u_{10}+u_{20}\right|^2
+ \left|u_{00}-2u_{01}+u_{02}\right|^2\right)
}{  \left|u_{10}-u_{00}\right|^2
  +\left|u_{20}-u_{10}\right|^2
+\beta\left|u_{11}-u_{01}\right|^2
+\beta\left|u_{11}-u_{10}\right|^2
+\beta\left|u_{01}-u_{02}\right|^2
},
 \end{equation}
where $$ D:=\left|u_{10}-u_{00}\right|^2
  +\left|u_{20}-u_{10}\right|^2
  +\beta\left|u_{11}-u_{01}\right|^2
+\beta\left|u_{11}-u_{10}\right|^2
+\beta\left|u_{01}-u_{02}\right|^2
\neq0.$$
Without loss of generality, we can assume that 
\begin{equation}\label{eq:magicneq2bis}
    \left|u_{10}-u_{00}\right|^2
  +\left|u_{20}-u_{10}\right|^2
    +\beta\left|u_{11}-u_{01}\right|^2
+\beta\left|u_{11}-u_{10}\right|^2
+\beta\left|u_{01}-u_{02}\right|^2=1
\end{equation}
and
\begin{equation}\label{eq:sum uij}
\sum_{i,j}u_{ij}=0.
\end{equation}
Indeed, 
if $D\neq 1$, we can divide all $u_{ij}$ by $D$. Moreover, we can subtract $\sum_{i,j}u_{ij}$ to all $u_{ij}$ in \eqref{eq:sup alpha} without changing the definition of $\alpha$, so the $u_{ij}$ can be taken such that \eqref{eq:sum uij} holds.
We clearly have $ \alpha\leq 1$.}

{Let us highlight that the set of $u_{ij}$ satisfying \eqref{eq:magicneq2bis} and \eqref{eq:sum uij} is uniformly bounded.
We denote by $\bar u_{ij}=u_{ij}-u_{00}$ for all $(i,j)\neq (0,0)$. 
Then \eqref{eq:magicneq2bis} can be written
\begin{equation*}
    \left|\bar u_{10}\right|^2
  +\left|\bar u_{20}-\bar u_{10}\right|^2
    +\beta\left|\bar u_{11}-\bar u_{01}\right|^2
+\beta\left|\bar u_{11}-\bar u_{10}\right|^2
+\beta\left|\bar u_{01}-\bar u_{02}\right|^2=1.
\end{equation*}
We deduce that
\begin{equation}\label{eq:estim bar uij}
\left|\bar u_{10}\right|\leqslant 1,~ 
\left|\bar u_{20}\right|\leqslant 2,~
\left|\bar u_{11}\right|\leqslant 1+1/\beta,~
\left|\bar u_{01}\right|\leqslant 1+2/\beta,~
\left|\bar u_{02}\right|\leqslant 1+3/\beta.
\end{equation}
Using \eqref{eq:sum uij} and triangular inequality,
$$|6u_{00}|
=\left|\sum_{(i,j)}(u_{ij}-u_{00})\right|
=\left|\sum_{(i,j)}\bar u_{ij}\right|
\leqslant 6+6/\beta.$$
Hence, $|u_{00}|\leqslant 1+1/\beta$.
Moreover, using \eqref{eq:estim bar uij}, 
$|\bar u_{ij}|\leqslant 2+3/\beta$. 
Thus
$$|u_{ij}|\leqslant 3+4/\beta$$ 
for all $(i,j)$.
}

{Since the set of $u_{ij}$ satisfying \eqref{eq:magicneq2bis} and \eqref{eq:sum uij} is closed, bounded and finite-dimensional, the supremum in the definition of $\alpha$ is reached. }

Assume that $ \alpha=1$. {Since $\beta\neq0$,  }
there exists $u_{ij}$ such that 
$$
\left|u_{11}-u_{01}\right|^2
+\left|u_{11}-u_{10}\right|^2
+\left|u_{01}-u_{02}\right|^2
+ \left|u_{00}-2u_{10}+u_{20}\right|^2
+ \left|u_{00}-2u_{01}+u_{02}\right|^2
=0.
$$
We deduce that $u_{11}=u_{01}=u_{10}=u_{02}$, then
$$
  \left|u_{00}-2u_{10}+u_{20}\right|^2
+ \left|u_{00}-u_{10}\right|^2
=0.$$
 Hence $u_{00}=u_{10}=u_{20}$ which is in contradiction with \eqref{eq:magicneq2bis}.
  \end{proof}

Lemmas \ref{lemma:magic} and \ref{lemma:magic2} allow us to deduce the coercivity 
of the bilinear form $a_h$:

\begin{proposition}[Coercivity]\label{prop:coer}
There exists $c>0$ such that, for each $u_h$,
$$a_h(u_h,u_h)\geqslant c |||u_h|||_h^2,$$
where 
{$$|||u_h|||_h=\left(
\frac{1}{h^2}|u_h|_{h,1,\Omega_h}^2
+b_h(u_h,u_h)+j_h(u_h,u_h)\right)^{1/2}.
$$}
\end{proposition}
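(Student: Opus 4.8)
The plan is to test the form on $v_h=u_h$ and bound each piece from below. Since $b_h(u_h,u_h)$ and $j_h(u_h,u_h)$ are manifestly sums of squares, they are nonnegative and reproduce exactly the last two contributions of $|||u_h|||_h^2$; the whole difficulty is therefore to extract the discrete $H^1$-seminorm from the discrete Laplacian term $(-\Delta_h u_h,u_h)$ using only controlled fractions of $b_h$ and $j_h$. Concretely I would aim for the stronger-looking statement
$$(-\Delta_h u_h,u_h)+\theta\,b_h(u_h,u_h)+\theta\,j_h(u_h,u_h)\geq c'\,\tfrac{1}{h^2}\!\!\sum_{\text{interior edges}}\!\!|u_{\alpha+d}-u_\alpha|^2$$
for a small $\theta\in(0,1)$, where an \emph{interior edge} has both endpoints in $\Omega$. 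Indeed, Lemmas \ref{lemma:magic}–\ref{lemma:magic2} allow me to bound every boundary-edge difference (one endpoint in $\Omega$, one ghost) by nearby interior-edge differences plus second differences, so that $\tfrac{1}{h^2}|u_h|_{h,1,\Omega_h}^2\leq C\big(\tfrac{1}{h^2}\sum_{\text{int}}|\cdots|^2+j_h(u_h,u_h)\big)$; combined with $\theta<1$ this yields $a_h(u_h,u_h)\geq c\,|||u_h|||_h^2$.

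First I would perform discrete summation by parts on $(-\Delta_h u_h,u_h)$, keeping the sum restricted to $\Omega_h^{\mathrm{int}}$. Along each grid direction this produces $\tfrac{1}{h^2}|u_{\alpha+d}-u_\alpha|^2$ for every edge with both endpoints in $\Omega$, plus exactly one boundary \emph{flux} $\tfrac{1}{h^2}(u_\alpha-u_{\alpha+d})u_\alpha$ for every edge joining an interior node $x_\alpha$ to a ghost node $x_{\alpha+d}$. The interior-edge squares are precisely what I want to keep; the flux terms are the obstruction. I would then use the elementary identity $(u_\alpha-u_{\alpha+d})u_\alpha=\tfrac12|u_\alpha-u_{\alpha+d}|^2+\tfrac12(u_\alpha^2-u_{\alpha+d}^2)$, which peels off half of the boundary-edge square (harmless) and isolates the genuinely indefinite piece $\tfrac12(u_\alpha^2-u_{\alpha+d}^2)$; only the negative ghost contribution $-\tfrac12 u_{\alpha+d}^2$ actually threatens coercivity.

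The heart of the argument, and the main obstacle, is to absorb $-\tfrac12 u_{\alpha+d}^2$ using the penalization $b_h$ together with a little of the boundary-edge square. Writing $\varphi_\alpha<0<\varphi_{\alpha+d}$ and normalizing $s=\varphi_{\alpha+d}/\sqrt{\varphi_\alpha^2+\varphi_{\alpha+d}^2}$, $t=|\varphi_\alpha|/\sqrt{\varphi_\alpha^2+\varphi_{\alpha+d}^2}$, the penalty on this edge is $\tfrac{\theta\gamma}{h^2}(s\,u_\alpha+t\,u_{\alpha+d})^2$, and I would show that the per-edge quadratic form
$$\tfrac12 u_\alpha^2-\tfrac12 u_{\alpha+d}^2+\theta\gamma\,(s\,u_\alpha+t\,u_{\alpha+d})^2+\lambda\,|u_\alpha-u_{\alpha+d}|^2$$
is positive semidefinite, uniformly over $s^2+t^2=1$ with $t>0$, provided $\lambda>\tfrac12$ and $\theta\gamma\geq \tfrac{1}{4(\lambda-1/2)}$. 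The $2\times2$ determinant condition reduces to $\theta\gamma\,(s+t)\big[\tfrac12(t-s)+\lambda(s+t)\big]\geq\tfrac14$, whose worst case is $t\to0$ (an interior node nearly on $\Gamma$, where the penalty degenerates) and which holds for $\gamma$ large; this is exactly where the hypothesis ``$\gamma$ large enough'' is used. Taking $\lambda=\tfrac12+\varepsilon$ with $\varepsilon$ small, I pay the extra $\varepsilon\,|u_\alpha-u_{\alpha+d}|^2$ by borrowing from interior edges and second differences through Lemma \ref{lemma:magic} (or Lemma \ref{lemma:magic2} in the two-dimensional band/corner configurations, where several ghost edges lean on the same interior stencil); taking $\sigma$ large keeps the borrowed second differences a small fraction of $j_h$. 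Summing the nonnegative per-edge forms and the leftover positive fractions of interior edges, $b_h$ and $j_h$ then gives the lower bound.

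The geometric hypotheses enter precisely at this borrowing step: $r$-smoothness guarantees that behind every boundary edge there is an inward direction carrying the interior nodes required by Lemmas \ref{lemma:magic}–\ref{lemma:magic2}, and the bound $h<2r/\sqrt{10}$ ensures the associated three-point stencils stay inside $\overline{\Omega}_h$ and that each interior edge or stencil is reused only a bounded number of times, so the accumulated ``debt'' stays below the available reservoir. The delicate point to watch is exactly this bounded-overlap bookkeeping in two dimensions, which is the reason the stronger Lemma \ref{lemma:magic2} is needed alongside the one-dimensional Lemma \ref{lemma:magic}.
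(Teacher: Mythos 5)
Your proposal is correct, and its skeleton coincides with the paper's proof of Proposition \ref{prop:coer}: discrete summation by parts along grid lines, absorption of the resulting boundary fluxes by controlled fractions of $b_h$ and $j_h$, Lemmas \ref{lemma:magic}--\ref{lemma:magic2} to trade boundary-edge differences for interior differences plus second differences (with the $r$-smoothness and $h<2r/\sqrt{10}$ hypotheses invoked exactly where you invoke them, namely for rows containing a single interior node), and the final order of choices ``first the small parameter, then $\gamma,\sigma$ large''. Where you genuinely diverge is the central flux-absorption step. The paper keeps the boundary-edge squares inside the summation-by-parts identity and bounds the flux $(u_{0,j}-u_{1,j})u_{0,j}$ via the interpolation identity \eqref{eq:interpol}, which expresses the ghost value through the penalized quantity $u^{\varphi}_{(0,j)-(1,j)}$ and the edge difference (using the sign bounds \eqref{eq:phi<0}); Young's inequality then prices the penalty at $\tfrac{1}{2\varepsilon}$, and Lemma \ref{lemma:magic} must absorb the \emph{full} $(1+\varepsilon/2)$ multiple of the boundary-edge square, which is precisely why its constants $\alpha_1<1$ and $\alpha_2<1/2$ matter there. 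You instead split the flux as $\tfrac12|u_\alpha-u_{\alpha+d}|^2+\tfrac12 u_\alpha^2-\tfrac12 u_{\alpha+d}^2$ and prove positive semidefiniteness of a per-edge $2\times2$ form; your determinant computation is right (the minimum of $(s+t)\bigl[\tfrac12(t-s)+\lambda(s+t)\bigr]$ on the quarter circle is indeed $\lambda-\tfrac12$, attained at $t=0$, i.e.\ when the interior endpoint sits on $\Gamma$ and the penalty degenerates), so Lemmas \ref{lemma:magic}--\ref{lemma:magic2} are demoted to supplying only the small $\varepsilon$-excess and, at the very end, re-inserting the boundary edges into $\tfrac{1}{h^2}|u_h|_{h,1,\Omega_h}^2$. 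What your route buys is an explicit, checkable threshold $\theta\gamma\geq\tfrac{1}{4(\lambda-1/2)}$ and a clean separation of roles (penalty controls the ghost value, stabilization funds the borrowing); what the paper's route buys is brevity, since the identity \eqref{eq:interpol} performs your $2\times2$ algebra implicitly. The two points your sketch leaves as bookkeeping --- bounded overlap of the borrowing, and the inequality $\tfrac{1}{h^2}|u_h|_{h,1,\Omega_h}^2\leq C\bigl(\tfrac{1}{h^2}\sum_{\mathrm{int}}|u_{\alpha+d}-u_\alpha|^2+j_h(u_h,u_h)\bigr)$ --- are real obligations but go through with the same two-case analysis ($N_j>2$ versus $N_j=2$) that the paper itself uses, so I see no gap.
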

In the following proof and in the rest of the manuscript, we will use the following notation for each $i,j$
\begin{equation}
    \label{eq:bar u}
 u_{(i, j)-(i+1,j)}^{\varphi} = \dfrac{\varphi_{i, j} u_{i+1, j} - \varphi_{i+1, j} u_{i, j}}{\varphi_{i, j}   - \varphi_{i+1, j}}.
\end{equation}

\begin{proof}[Proof of Proposition \ref{prop:coer}]
     Let us fix the index $j$, and assume that the nodes $(x_i,y_j)$ belonging to $\Omega_h$ are for $i\in\{M_j,\dots,N_j\}$. Without loss of generality, we can assume that $M_j=0$.
    \smallskip

{
\begin{figure}[!ht]
    \centering
\begin{tikzpicture}
\draw (0,0) circle[radius=2pt];
  \foreach \Point/\PointLabel in {(0,0)/u_{00}, (2,0)/u_{10}, (4,0)/u_{20}}
\draw[fill=black] \Point circle (0.08) node[above right] {$\PointLabel$};
\draw [dashed,very thick] (0,0) -- (4,0);
\node at (1.5,1) {$\Gamma$};
\node at (3,1) {$\Omega$};
\draw[very thick] (1.2,1.2) arc (140:220:2cm);
\end{tikzpicture}
    \caption{{Case $N_j>2$ in the proof of Proposition \ref{prop:coer}.}}
    \label{fig:lem1}
\end{figure}
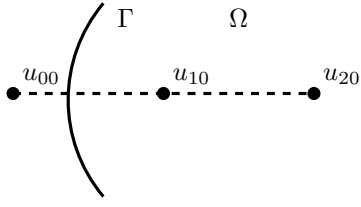
}

\noindent\textbf{Case $\boldsymbol{N_j>2}$:}
{We are in the situation described in Figure \ref{fig:lem1}.}
We remark that 
\begin{multline*}
\sum_{i=1}^{N_j-1}(-u_{i-1,j}+2u_{i,j}-u_{i+1,j})u_{i,j}
\\=-\underbrace{(u_{0,j}-u_{1,j})u_{0,j}}_{(I)}+\underbrace{(u_{N_j-1,j}-u_{N_j,j})u_{N_j,j}}_{(II)}+\sum_{i=0}^{N_j-1}|u_{i+1,j}-u_{ij}|^2.
\end{multline*}
Let us first estimate the term (I). 
Using notation \eqref{eq:bar u}, we remark that
\begin{multline}\label{eq:interpol}
u_{0, j} = 
\dfrac{\sqrt{\varphi_{0, j}^2 + \varphi_{1, j}^2}}{\varphi_{0, j} - \varphi_{1, j}}\dfrac{u_{0, j}\varphi_{0, j} - u_{0, j}\varphi_{1, j}}{\sqrt{\varphi_{0, j}^2 + \varphi_{1, j}^2}}\\
=\dfrac{\sqrt{\varphi_{0, j}^2 + \varphi_{1, j}^2}}{\varphi_{0, j} - \varphi_{1, j}}\left(u_{(0, j)-(1,j)}^{\varphi} + \dfrac{\varphi_{0, j} }{\sqrt{\varphi_{0, j}^2 + \varphi_{1, j}^2}}   (u_{0, j} - u_{1, j})\right) .
   \end{multline}
Since $\varphi_{0, j}\geq 0$ and $\varphi_{1, j} < 0$, 
we have 
\begin{equation}\label{eq:phi<0}
0 \leqslant\dfrac{\varphi_{0, j} }{\sqrt{\varphi_{0, j}^2 + \varphi_{1, j}^2}} < 1
\text{ and }
\dfrac{\sqrt{\varphi_{0, j}^2 + \varphi_{1, j}^2}}{\varphi_{0, j} - \varphi_{1, j}}\leq 1.
\end{equation}
Hence
\begin{equation*}
     (I)\leq |(u_{0, j} - u_{1, j}) u_{(0, j)-(1,j)}^{\varphi}| + (u_{0, j}
   - u_{1, j})^2 .
\end{equation*}   
Moreover, using Young inequality with $\varepsilon > 0$ and Lemma \ref{lemma:magic} with $\alpha_1 \in (0, 1)$, $\alpha_2 \in (0, 1/2)$ and $\beta > 0$, we observe
\begin{multline*}
     (I)\leq \frac{1}{2 \varepsilon} (u_{(0, j)-(1,j)}^{\varphi})^2 +
   \left( 1 + \frac{\varepsilon}{2} \right) (u_{0, j} - u_{1, j})^2 \\ \leqslant
   \frac{1}{2 \varepsilon} (u_{(0, j)-(1,j)}^{\varphi})^2 + \left( 1 + \frac{\varepsilon}{2}
   \right)  (\alpha_1| u_{1, j} - u_{0, j} |^2 + \alpha_2| u_{2, j} - u_{1, j} |^2) \\+
   \left( 1 + \frac{\varepsilon}{2} \right) \beta | u_{2} - 2 u_{1} + u_{0} |^2 .
\end{multline*}
Similarly, it holds
\begin{multline*}
     (II)  \leqslant
   \frac{1}{2 \varepsilon} (u_{(N_j-1, j)-(N_j,j)}^{\varphi})^2 \\
   + \left( 1 + \frac{\varepsilon}{2}
   \right)  (\alpha_1| u_{N_j-1, j} - u_{N_j, j} |^2 + \alpha_2| u_{N_j-2, j} - u_{N_j-1, j} |^2) \\+
   \left( 1 + \frac{\varepsilon}{2} \right) \beta | u_{N_j-2} - 2 u_{N_j-1} + u_{N_j} |^2 .
\end{multline*}
Since $N_j>2$, denoting by $\alpha=\max\{\alpha_1,2\alpha_2\}$, one has
\begin{multline*}
\sum_{i=1}^{N_j-1}\dfrac{(-u_{i-1,j}+2u_{i,j}-u_{i+1,j})u_{i,j}}{h^2}
\geqslant
   \left(1-\alpha\left(1+\frac{\varepsilon}{2}\right)\right) \sum_{i=0}^{N_j-1}\left|\frac{u_{i+1,j}-u_{ij}}{h}\right|^2\\
  -\frac{1}{2\varepsilon}\sum_{i=0}^{N_j-1}\dfrac{(u_{(i, j)-(i+1,j)}^{\varphi})^2}{h^2}
  -\left(1+\frac{\varepsilon}{2}\right)\beta\sum_{i=1}^{N_j-1}\left|\frac{- u_{i - 1, j} + 2 u_{ij} - u_{i + 1, j}}{h }\right|^2.
\end{multline*}

\smallskip

{
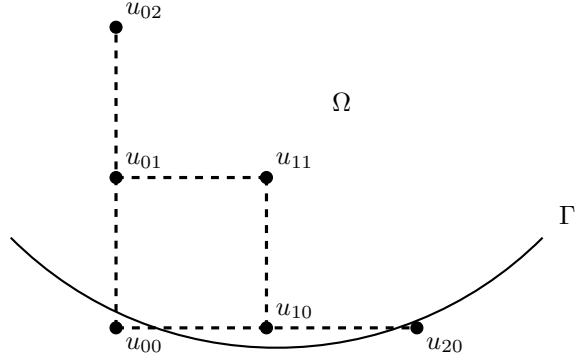
\begin{figure}[!ht]
    \centering
\begin{tikzpicture}
\draw (0,0) circle[radius=2pt];
  \foreach \Point/\PointLabel in { (2,0)/u_{10},(0,2)/u_{01}, (2,2)/u_{11}, (0,4)/u_{02}}
\draw[fill=black] \Point circle (0.08) node[above right] {$\PointLabel$};
  \foreach \Point/\PointLabel in {(0,0)/u_{00}, (4,0)/u_{20}}
\draw[fill=black] \Point circle (0.08) node[below right] {$\PointLabel$};
\draw [dashed,very thick] (0,0) -- (4,0);
\draw [dashed,very thick] (0,0) -- (0,4);
\draw [dashed,very thick] (2,2) -- (2,0);
\draw [dashed,very thick] (2,2) -- (0,2);
\node at (3,3) {$\Omega$};
\draw[thick] (-1.4,1.2) arc (225:315:5cm);
\node at (6,1.5) {$\Gamma$};
\end{tikzpicture}
    \caption{{Case $N_j=2$ in the proof of Proposition \ref{prop:coer}.}}
    \label{fig:lem2}
\end{figure}
}

\noindent\textbf{Case $\boldsymbol{N_j=2}$:}
One has
\begin{multline*}
(-u_{0,j}+2u_{1,j}-u_{2,j})u_{1,j}
\\=-(u_{0,j}-u_{1,j})u_{0,j}+(u_{1,j}-u_{2,j})u_{2,j}+|u_{1,j}-u_{0j}|^2+|u_{2,j}-u_{1j}|^2\\
\leq |(u_{0, j} - u_{1, j}) u_{(0, j)-(1,j)}^{\varphi}| 
+ (u_{0, j}   - u_{1, j})^2 +
   |(u_{2, j} - u_{1, j}) u_{(1, j)-(2,j)}^{\varphi} |
   + (u_{2, j}   - u_{1, j})^2 .
\end{multline*}
We have $(x_0,y_j),(x_2,y_j)\not\in\Omega$ and $(x_1,y_j)\in\Omega$.
The circle containing $(0,0)$, $(2h,0)$, $(0,2h)$ has a radius equal to $\dfrac{\sqrt{10}}{2}h$. 
Then, 
since $\Omega$ is $r$-smooth,
for $h<\dfrac{2r}{\sqrt{10}}$, 
without loss of generality, 
we can assume that we are in the situation described in Figure \ref{fig:lem2}.
Thanks to Lemma \ref{lemma:magic2}, we obtain the same conclusion as in the previous case.

\smallskip

\noindent    \textbf{Conclusion:} Combining the two cases,
\begin{multline*}
    a_h(u_h,u_h)\geqslant
   \left(1-\alpha\left(1+\frac{\varepsilon}{2}\right)\right) \left(\sum_{i,j}\left|\frac{u_{i+1,j}-u_{ij}}{h}\right|^2
    +\sum_{j,i}\left|\frac{u_{i,j+1}-u_{ij}}{h}\right|^2\right)\\
 + \left(1-\frac{1}{2\varepsilon\gamma}
 \right)  b_h(u_h,u_h)
  +\left(1-\left(1+\dfrac{\varepsilon}{2}\right)\frac{\beta}{\sigma}
  \right) j_h(u_h,u_h),
\end{multline*}
which leads to the result taking $\varepsilon$ such that $\alpha \left( 1 + \frac{\varepsilon}{2} \right) < 1$ and then $\gamma$, $\sigma$ large enough.
\end{proof}

\begin{remark}
As seen in the above proof, the assumption on $h$ in Theorem \ref{theo} can be replaced by the two assumptions :
\begin{itemize}
    \item If $(x_{i+1},y_{j}){,}\ (x_{i-1},y_{j})\not\in \Omega\text{  and }(x_{i},y_{j})\in \Omega$ then there exists $k,l\in\{-1,1\}$ such that
$(x_{i+k},y_{j+l})$, $(x_{i+k},y_{j+2l})$, $(x_{i},y_{j+l})\in \Omega.$
\item 
If $(x_{i},y_{j+1}){,}\ (x_{i},y_{j-1})\not\in \Omega\text{  and }(x_{i},y_{j})\in \Omega$ then there exists $k,l\in\{-1,1\}$ such that
$(x_{i+k},y_{j+l})$, $(x_{i+2k},y_{j+l})$, $(x_{i+k},y_{j})\in \Omega.$
\end{itemize}
\end{remark}

We will also need the following Poincaré estimate :
\begin{lemma}\label{lemma2}
 There exists $C_P>0$ such that for each $v_h=(v_{ij})_{ij}$, 
    $$\|v_h\|_{h,\infty,\Omega_h}^2+\|v_h\|_{h,0,\Omega_h}^2
    \leq C_P \left(|v_h|_{h,1,\Omega_h}^2+ h^3b_h(v_h,v_h)\right).$$
\end{lemma}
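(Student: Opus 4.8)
The plan is to prove the estimate by a discrete Poincaré argument carried out one grid line at a time, exploiting that the penalty $b_h$ controls the values of $v_h$ \emph{at the points where $\Gamma$ meets the grid}. Two elementary algebraic facts drive everything. First, on a crossing edge from $x_\alpha$ to $x_{\alpha+d}$ with $\varphi_\alpha<0\le\varphi_{\alpha+d}$, the quantity penalized by $b_h$ is exactly $\varphi_{\alpha+d}v_\alpha-\varphi_\alpha v_{\alpha+d}=-(\varphi_\alpha-\varphi_{\alpha+d})\,v^{\varphi}$ in the notation \eqref{eq:bar u}, and since the two nodal values of $\varphi$ have opposite signs one has
\[ 1\le\frac{(\varphi_\alpha-\varphi_{\alpha+d})^2}{\varphi_\alpha^2+\varphi_{\alpha+d}^2}\le 2 . \]
Hence $\sum_{\text{crossings}}|v^{\varphi}|^2\le \frac{h^2}{\gamma}\,b_h(v_h,v_h)$. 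Second, on the same edge, $v_\alpha-v^{\varphi}=\frac{\varphi_\alpha}{\varphi_\alpha-\varphi_{\alpha+d}}(v_\alpha-v_{\alpha+d})$ with $0\le \varphi_\alpha/(\varphi_\alpha-\varphi_{\alpha+d})\le 1$, so the nodal value of $v_h$ at a boundary node differs from the interpolated trace value $v^{\varphi}$ by at most one edge increment.

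Next I would fix a row $y=y_j$ and consider a maximal run of consecutive nodes lying in $\Omega$, flanked by an exterior node; the flanking edge is a crossing edge carrying a trace value $v^{\varphi}$ controlled as above. For any node $x_{i,j}$ in the run I would use the telescoping identity
\[ v_{i,j}=v^{\varphi}+(v_{P,j}-v^{\varphi})+\sum_{l=P}^{i-1}(v_{l+1,j}-v_{l,j}), \]
where $P$ is the first interior index, and bound $|v_{i,j}|$ by $|v^{\varphi}|$, one extra increment $|v_{P,j}-v_{P-1,j}|$, and the sum of the edge increments along the row. Nodes of $\Omega_h$ sitting just outside $\Omega$ are reduced to the interior case by one further increment. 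Every increment appearing here is a genuine term of $|v_h|_{h,1,\Omega_h}^2$.

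For the $L^2$ bound I would square this identity (Cauchy--Schwarz over the at most $N=(b_1-a_1)/h$ summands), multiply by $h^2$, and sum over all nodes. The increment part, summed first over $i$ on a fixed row and then over all rows and both directions, contributes $\lesssim|v_h|_{h,1,\Omega_h}^2$; the trace part is $h^2\sum_{ij}|v^{\varphi}|^2$, and summing over the $O(1/h)$ nodes of each row before summing over rows produces the extra factor $h$ in front of $\sum_{\text{crossings}}|v^{\varphi}|^2$, hence the term $h^3 b_h(v_h,v_h)$. This is exactly the mechanism that calibrates the power $h^3$ in the statement, and I expect the $L^2$ estimate to close cleanly.

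I expect the $L^\infty$ part to be the genuine obstacle. Taking the maximum in the same pointwise identity, one must control increments accumulated along a path of up to $O(1/h)$ edges, and a crude Cauchy--Schwarz over such a path loses a factor $h^{-1/2}$ that is not absorbed by the $h^3 b_h$ term; obtaining an $h$-independent constant therefore seems to require more than the raw path length --- for instance summing the squared increments against a judicious decaying weight, or invoking a discrete maximum principle for the interior five-point stencil so that $\max|v_h|$ is attained near $\Gamma$ and thus reached along a short path. I would attempt the weighted-telescoping route first. The remaining, more routine, ingredient is geometric: using the $r$-smoothness of $\Omega$ together with $\Omega\subset\mathcal{O}$ to guarantee that every maximal interior run on each row and column is flanked by a crossing edge belonging to $B_x$ or $B_y$, so that a controlled trace value $v^{\varphi}$ is always available, and that the chosen paths overcount each increment only a bounded number of times.
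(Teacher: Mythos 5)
Your treatment of the $\|\cdot\|_{h,0,\Omega_h}$ half is, up to packaging, exactly the paper's own proof: fix a row, telescope from the first node of $\Omega_h$ on that row, apply Cauchy--Schwarz over the $O(L/h)$ summands, and control the starting value through the crossing edge. Your two ``elementary algebraic facts'' are the paper's \eqref{eq:interpol} and \eqref{eq:phi<0} in a slightly different dress, and the calibration $h\cdot h^2/\gamma = h^3/\gamma$ that produces the $h^3 b_h(v_h,v_h)$ term is identical (the extra factor $h$ coming from the $L^2$ weight $h^2$ against the $O(1/h)$ rows). For the $L^2$ part your proposal is complete, correct, and coincides with the paper.

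The obstacle you flag for the $L^\infty$ half is genuine, and the paper does not overcome it: its proof consists precisely of the estimates you reproduce, followed by the words ``which leads to the conclusion''. Chasing constants in that argument gives only
\[
\|v_h\|_{h,\infty,\Omega_h}^2 \;\leq\; C\left(h^2\, b_h(v_h,v_h) + h^{-1}\,|v_h|_{h,1,\Omega_h}^2\right),
\]
which is weaker by a factor $1/h$ than the statement, and the loss is exactly the one you identified: in 2D, $|v_h|_{h,1,\Omega_h}^2$ is the bare sum of squared increments, so a path of $O(1/h)$ edges cannot be absorbed. Moreover, neither of your proposed rescues (weighted telescoping, maximum principle) can close it, because the $L^\infty$ half of the lemma is false with an $h$-independent constant in 2D. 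Take $v_h$ to be the sampling of $\min\{1,\,\log(R/r)/\log(R/h)\}$, where $r$ is the distance to a grid node lying at distance at least $2R$ from $\Gamma$, extended by zero outside the disk of radius $R$. Then every crossing edge carries zero values, so $b_h(v_h,v_h)=0$; a direct computation gives $|v_h|_{h,1,\Omega_h}^2 \leq C/\log(R/h)\to 0$, while $\|v_h\|_{h,\infty,\Omega_h}=1$. This is the standard sharpness example behind the $|\log h|^{1/2}$ factor in the two-dimensional discrete Sobolev embedding; in 1D, where the seminorm carries the weight $h^{-1}\sum(\text{increments})^2$, the telescoping argument does close, which is presumably the origin of the oversight. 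So the part of your proof that succeeds is the paper's proof, and the gap you honestly flagged is not a defect of your attempt but an error in the lemma itself: its $L^2$ conclusion (which is what the proof of Theorem \ref{theo} actually uses to control $\|e_h\|_{h,0,\Omega_h}$) survives, whereas the $L^\infty$ conclusion, and hence the $L^\infty$ estimate of Theorem \ref{theo} as derived from this lemma, requires a genuinely different argument.
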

\begin{proof}
 Let us fix the index $j$, and assume that the first and the last term $(x_i,y_j)$ belonging to $\Omega_h$ are for $i\in\{M_j,\dots,N_j\}$. Without loss of generality, we can assume that $M_j=0$.
We have for all $i$
$$v_{ij}=v_{0j}+\sum_{k=0}^{i-1}(v_{k+1,j}-v_{kj}).$$
Then
$$v_{ij}^2\leq 2v_{0j}^2+2(i-1)\sum_{k=0}^{i-1}(v_{k+1,j}-v_{kj})^2.$$
Denoting by $L$ the maximum of the diameters of the set $\Omega_h$ {(i.e. the biggest distance between two points of $\Omega_h$)},
$N_j\leqslant C L/h$ ($C>0$), we deduce that
$$\sum_{i=0}^{N_j}v_{ij}^2\leq 2C\dfrac{L}{h}v_{0j}^2+2C^2\dfrac{L^2}{h^2}\sum_{i=0}^{N_j-1}(v_{i+1,j}-v_{ij})^2.$$
Using \eqref{eq:interpol} and \eqref{eq:phi<0}, 
$$v_{0, j}^2 \leq 2( u_{(i, j)-(i+1,j)}^{\varphi} )^2 + 2   (v_{0, j} - v_{1, j})^2, $$
which leads to the conclusion.

\end{proof}

\begin{proof}[Proof of Theorem \ref{theo}]

Let us now prove Theorem \ref{theo}.
We remark that there exists $C_0>0$ such that for all $f\in C^2(\Omega)$ and all $h<h_0$ with $h_0>0$, there exists an extrapolation $\tilde u\in \mathcal{C}^4$ of the solution $u$ of \eqref{eq:poisson} such that
\begin{equation}\label{eq:extra}
\|\tilde u\|_{\mathcal{C}^4(\overline{\Omega}_h)}\leqslant C_0 \|u\|_{\mathcal{C}^4(\Omega)}.
\end{equation}
Consider $\tilde u$ such an extrapolation. 
We denote by $\tilde f=-\Delta \tilde u$ 
and $\tilde U=(\tilde u_{ij})_{ij}=(\tilde u(x_{i},y_{j}))_{ij}$.

Let us denote by $e_{ij}=\tilde u_{ij}-u_{ij}$ and $e_h=(e_{ij})_{ij}$.
Thanks to Proposition \ref{prop:coer}, it holds
$$|||e_h|||^2_h\leq \frac{1}{c} a_h(e_h,e_h).$$
Since $u_h$ is solution to \eqref{eq:diff},
$$
a_h(u_h,e_h)
=\sum_{ij}f_{ij}e_{ij}.
$$
Thus
\begin{multline*}
a_h(e_h,e_h)
=\underbrace{-\sum_{\scriptstyle i,j|(x_i,y_j)\in \Omega}\left(-\frac{4\tilde u_{ij} -\tilde  u_{i - 1, j} -\tilde  u_{i + 1, j} -\tilde  u_{i, j - 1} -
\tilde   u_{i, j + 1}}{h^2}-f_{ij}\right)e_{ij}}_{(I)}\\+\underbrace{b_h(\tilde U,e_h)}_{(II)}+\underbrace{j_h(\tilde U,e_h)}_{(III)}.
\end{multline*}
Let us estimate each term:

\textbf{Term (I):} 
Thanks to Cauchy-Schwarz inequality,
$$(I)\leqslant \sqrt{\sum_{\scriptstyle i,j|(x_i,y_j)\in \Omega}\left(-\frac{4 \tilde u_{ij} - \tilde u_{i - 1, j} -\tilde  u_{i + 1, j} -\tilde  u_{i, j - 1} -\tilde  u_{i, j + 1}}{h^2}-f_{ij}\right)^2}
\times \sqrt{\sum_{\scriptstyle i,j|(x_i,y_j)\in \Omega}e_{ij}^2}.$$
There exist $(\xi_{i},\nu_j)\in [x_i-h,x_i+h]\times [y_j-h,y_j+h]$ such that
$$-\frac{4\tilde u_{ij} -\tilde  u_{i - 1, j} -\tilde  u_{i + 1, j} -\tilde  u_{i, j - 1} -
\tilde   u_{i, j + 1}}{h^2}= f_{ij}-\frac{h^2}{12}\left(\frac{\partial^4 \tilde u}{\partial x^4}(\xi_i,y_{j})+\frac{\partial^4 \tilde u}{\partial y^4}(x_i,\nu_{j})\right).$$
Since the number of nodes in $\Omega_h$ is of order $1/h^2$, we deduce that
\begin{multline}
\sqrt{\sum_{\scriptstyle i,j|(x_i,y_j)\in \Omega}\left(-\frac{4 \tilde u_{ij} - \tilde u_{i - 1, j} -\tilde  u_{i + 1, j} -\tilde  u_{i, j - 1} -\tilde  u_{i, j + 1}}{h^2}-f_{ij}\right)^2}\\
{\leqslant
\sqrt{
\dfrac{1}{h^2}\times \dfrac{h^4}{12^2} \|u\|_{\mathcal{C}^4(\Omega)}^2}
\leqslant Ch\|u\|_{\mathcal{C}^4(\Omega)}.    }
\end{multline}
Moreover, thanks to Lemma \ref{lemma2},
{
$$\sum_{\scriptstyle i,j|(x_i,y_j)\in \Omega}e_{ij}^2
=\dfrac{1}{h^2}\|e_h\|_{h,0,\Omega_h}^2
\leq C_P\left(
\dfrac{1}{h^2}\|e_h\|_{h,1,\Omega_h}^2
+hb_h(e_h,e_h)
\right)
\leq C|||e_h|||_h^2.
$$}
Thus, 
\begin{equation*}
    (I)\leq Ch\| u\|_{\mathcal{C}^4(\Omega)}|||e_h|||_h.
\end{equation*}

\textbf{Term (II):}
Consider $w:=\tilde u/\varphi$. 
Let $(x_{i},y_j)\in \partial\Omega_h$ such that $(x_{i+1},y_j)\in\Omega$.
Using Sobolev inequality and Hardy inequality (see e.g. \cite{duprez})
$$\|w\|_{\mathcal{C}^{1}([x_i,x_{i+1}])}\leqslant C \|w\|_{2,[x_i,x_{i+1}]}
\leqslant C \|\tilde u\|_{3,[x_i,x_{i+1}]}.
$$
Hence 
\begin{multline*}\left|\dfrac{\varphi_{(i+1)j}\tilde u_{i,j}-\varphi_{ij}\tilde u_{i+1,j}}{\sqrt{\varphi_{(i+1)j}^2+\varphi_{ij}^2}}\right|
\leq \left|\dfrac{\varphi_{(i+1)j}\varphi_{ij}}{\min\{|\varphi_{(i+1)j}|,|\varphi_{ij}|\}}\right||w(x_i,y_i)-w(x_{i+1},y_j)|\\
\leq \max\{|\varphi_{ij}|,|\varphi_{i+1,j}|\}|w(x_i,y_i)-w(x_{i+1},y_j)|\\
 \leq Ch\|\varphi\|_{L^{\infty}([x_i,x_{i+1}])}\|w\|_{\mathcal{C}^{1}([x_i,x_{i+1}])}
 \leq Ch^2\|\tilde u\|_{3,[x_i,x_{i+1}]}.
\end{multline*}
Thus, since the number of edges where is applied the ghost penalty is of order $\frac{CL}{h}$,
\begin{multline*}
(II)\leqslant b_h(\tilde U,\tilde U)^{1/2}b_h(e_h,e_h)^{1/2}\\
\leqslant \dfrac{C}{h}\left(\sqrt{\sum_{(i,j)\in B_x}\left|\dfrac{\varphi_{(i+1)j}\tilde u_{i,j}-\varphi_{ij}\tilde u_{i+1,j}}{\sqrt{\varphi_{(i+1)j}^2+\varphi_{ij}^2}}\right|^2}+\sqrt{\sum_{(i,j)\in B_y}\left|\dfrac{\varphi_{i(j+1)}\tilde u_{i,j}-\varphi_{ij}\tilde u_{i(j+1)}}{\sqrt{\varphi_{i(j+1)}^2+\varphi_{ij}^2}}\right|^2}\right)|||e_h|||_h\\
\leqslant C\sqrt{h}\|\tilde u\|_{3,\overline{\Omega}_h}|||e_h|||_h.
\end{multline*}

\textbf{Term (III):}
 Again, since the number of edges where the ghost penalty is applied is of order $\frac{CL}{h}$,
\begin{multline*}
\sum_{(i,j)\in J_x}\frac{-\tilde u_{i - 1, j} + 2 \tilde u_{ij} - \tilde u_{i + 1, j}}{h }
  \times \frac{- e_{i - 1, j} + 2 e_{ij} - e_{i + 1, j}}{h }\\
  \leqslant
 Ch\|\tilde u\|_{\mathcal{C}^2(\Omega_h)} \sum_{(i,j)\in J_x}\left| \frac{- e_{i - 1, j} + 2 e_{ij} - e_{i + 1, j}}{h }\right|\\
\leqslant Ch^{1/2}\|\tilde u\|_{\mathcal{C}^2(\Omega_h)} \left(\sum_{(i,j)\in J_x}\left| \frac{- e_{i - 1, j} + 2 e_{ij} - e_{i + 1, j}}{h }\right|^2\right)^{1/2}.
  \end{multline*}
Thus
$$(III)\leq Ch^{1/2}\|\tilde u\|_{\mathcal{C}^2(\overline{\Omega}_h)}|||e_h|||_h.$$
Combining with Lemma \ref{lemma2}, 
$$\|e_h\|_{h,1,\Omega}\leqslant h|||e_h|||_h \leqslant Ch^{3/2}\|u\|_{\mathcal{C}^4(\Omega)}.$$
Lemma \ref{lemma2} leads to the $L^{\infty}$ and $L^2$ estimates.
\end{proof}

Let us now prove Theorem \ref{theo:cond}. 
\begin{proof}[Proof of Theorem \ref{theo:cond}]
    
Thanks to Proposition \ref{prop:coer} and Lemma \ref{lemma2},
$$a_h(v_h,v_h)\geq C\sum_{(i,j):(x_i,y_j)\in \Omega_h}v_{ij}^2.$$
Moreover, thanks to the expression of $a_h$
$$a_h(v_h,v_h)\leq \dfrac{C}{h^2}\sum_{(i,j):(x_i,y_j)\in \Omega_h}v_{ij}^2,$$
which leads to Theorem \ref{theo:cond}. 

\end{proof}

\section{Alternative scheme}
\label{sec:alter}
Here, we propose an alternative version of the scheme that is more complex but (numerically) optimally convergent { for the $H^1$-norm}.

In 2D, consider the following finite difference scheme:
find a discrete function $u_h=(u_{ij})_{ij}$ defined on $\Omega_h$ such that
\begin{equation*}
\tilde a_h(u_h,v_h)=l_h(v_h),
\end{equation*}
for all discrete function $v_h=(v_{ij})_{ij}$ defined on $\Omega_h$,
where
$$\tilde a_h(u_h,v_h)=(-\Delta_h u_h,v_h)+\tilde b_h(u_h,v_h)+\tilde j_h(u_h,v_h),$$
with 
\begin{multline*}
\tilde b_h(u_h,v_h) =
\frac{\gamma}{2 h^2}
\left(\sum_{ij} \frac{u_{(i-1, j)-(i+1,j)}^{\varphi}\times v_{(i-1, j)-(i+1,j)}^{\varphi}}
{4\varphi_{i+1,j}^2\varphi_{i-1,j}^2+\varphi_{ij}^2\varphi_{i-1,j}^2+\varphi_{ij}^2\varphi_{i+1,j}^2} \right. \\
 \left. +\sum_{ij} \frac{u_{(i, j-1)-(i,j+1)}^{\varphi}\times v_{(i, j-1)-(i,j+1)}^{\varphi}}{4\varphi_{i,j+1}^2\varphi_{i,j-1}^2+\varphi_{ij}^2\varphi_{i,j-1}^2+\varphi_{ij}^2\varphi_{i,j+1}^2}\right)
  \end{multline*}
and
$$u_{(i-1, j)-(i+1,j)}^{\varphi}:=2\varphi_{i+1}\varphi_{i-1}u_i-\varphi_i\varphi_{i-1}u_{i+1}-\varphi_i\varphi_{i+1}u_{i-1},$$
$u_{(i, j-1)-(i,j+1)}^{\varphi}$ and $v_{(i, j-1)-(i,j+1)}^{\varphi}$ are similarly defined, and the second stabilization term is given by
\begin{multline}\label{eq:ghost3}
\tilde j_h(u_h,v_h)= \sigma\bigg( \sum_{i,j}\frac{- u_{i - 1, j} + 3 u_{ij} - 3u_{i + 1, j}+u_{i + 2, j}}{h }
  \times \frac{- v_{i - 1, j} + 3 v_{ij} - 3v_{i + 1, j}+v_{i + 2, j}}{h }
  \\
 +\sum_{i,j}\frac{- u_{i , j- 1} + 3 u_{ij} - 3u_{i , j+ 1}+u_{i, j + 2}}{h }
  \times \frac{- v_{i , j- 1} + 3 v_{ij} - 3v_{i, j + 1}+v_{i , j+ 2}}{h }
  \bigg).
\end{multline}
{The indices in the sums are such that all the corresponding nodes belong to $\Omega$ with one outside to $\Omega$.} 

{
\begin{remark}
    This alternative scheme is 
    given in the 2D case for readability but is still holding in 3D by adding the terms corresponding to the third index. We will give in Section \ref{sec:first num}  numerical illustrations in both cases.
We do not give convergence proof for this alternative scheme, but it can be analyzed in future work.
\end{remark}
}

Let us explain how to obtain the penalization term $\tilde b_h$. 
If we assume that $u=p\varphi$ with $p=p_0+p_1(x-x_i)$ and $u_{ij}=u(x_i,y_j)$, then
$$\begin{cases}
    u_{i+1,j}=(p_0+p_1h)\varphi_{i+1,j},\\
        u_{ij}=p_0\varphi_{ij},\\
            u_{i-1,j}=(p_0-p_1h)\varphi_{i-1,j},
\end{cases}$$
which gives
$$u_{(i-1, j)-(i+1,j)}^{\varphi}=0.$$

Concerning the stabilization term \eqref{eq:ghost3}, $\partial_xu(x_i,y_i)$ can be approximated (with an order 2) by
$$\dfrac{u(x_{i+1},y_i)-u(x_{i-1},y_i)}{2h} \text{ and }
\dfrac{-3u(x_{i},y_i)+4u(x_{i+1},y_i)-u(x_{i+2},y_i)}{2h},$$
which gives for the jump of  $\partial_xu(x_i,y_i)$
$$\frac{-u(x_{i+1},y_i)+3u(x_{i},y_i)-3u(x_{i+1},y_i)+u(x_{i+2},y_i)}{2h}.$$
Thus \eqref{eq:ghost3} is an approximation of \eqref{ghost}.

\section{Numerical illustrations}\label{sec:first num}

{In this section, we compare our two schemes with different existing approaches: 
\begin{itemize}
    \item $\varphi$-FEM scheme: to illustrate the interest of our new approach, it is mandatory to compare it numerically with $\varphi$-FEM \cite{duprez} to highlight the advantages and drawbacks of a finite element approach compared to a finite difference approach;
    \item a standard finite element method: we also compare our method to the {generic} 
    technique to solve PDEs, a classic conforming finite element method; 
    \item Shortley-Weller approach: We finally compare our method to the finite difference scheme of the literature. For that, we have implemented the Shortley-Weller method \cite{yoon,bramble}. The method has the same objective, which is to deal with complex geometries using a finite difference approach, but the associated matrix is not well conditioned. It is then natural to compare our work with this technique. 
\end{itemize}
{
The schemes presented in Section \ref{sec:main} and \ref{sec:alter} will be denoted in the different figures by $\varphi$-FD and $\varphi$-FD2, respectively.
The FEM schemes are written thanks to the \texttt{FEniCS} software (see \cite{logg2010dolfin}) and the finite difference schemes using the  \texttt{python} libraries {\texttt{scipy}\footnote{\url{https://scipy.org/}} \cite{scipy} and \texttt{numpy}\footnote{\url{https://numpy.org/}} \cite{numpy}}. } 
{The simulations were executed on a laptop with an \texttt{Intel Core i7-12700H CPU} and $32$Gb of memory. All the codes to reproduce the results are available at
\begin{center}
    \url{https://github.com/PhiFEM/PhiFD.git}
\end{center}}
}

Since the solution of $\varphi$-FD is defined only on the nodes $(x_i,y_j)_{ij}$ and the solutions to Shortley-Weller and Standard FEM live only on $\Omega$, then the $\varphi$-FEM and Standard FEM solutions will be interpolated on the nodes $(x_i,y_j)_{ij}$ belonging to $\Omega$.
The relative errors will then be computed 
thanks to the norms $\| \cdot \|_{h,0} $, $\| \cdot \|_{h,\infty} $ and $\| \cdot \|_{h,1} $ defined in Section \ref{sec:main}.

{Note that this way of calculating errors for finite element methods may slightly deteriorate the results compared to the standard way of calculating them. The idea is to compare the same quantities for each scheme.}

\subsection{First test case : 2D example}

We consider the explicit solution
\[ u = \cos \left( \frac{\pi}{2} r \right) \]
on the circle centered at $(0.5,0.5)$ with a radius $R=0.3+1e-10$
and $$r = \frac{1}{R}\sqrt{{(x-0.5)^2} + {(y-0.5)^2}}.$$ 
This choice of radius ensures that the real boundary cuts an edge close to a node. In this case, the Shortley-Weller approach will not be well-conditioned.

For the $\varphi$-FD scheme, the theoretical rate $h^{3/2}$ is reached for the $H^1$  norm and we observe a $h^2$ rate for the $L^2$ and $L^{\infty}$ norms (see Figure \ref{fig:error1}  and \ref{fig:error2}, left and Table \ref{tab:my_label}). 
$\varphi$-FD2 seems less good for coarse grids but is slightly better for fine resolution and has the optimal convergence $h^2$ in particular for the $H^1$ norm.
We also have the optimal conditioning number of the corresponding matrix with an order of $1/h^2$ (see Figure \ref{fig:error2}, right). The \texttt{python}  code has less than 100 lines (see Appendix) and uses only the libraries \texttt{scipy} and \texttt{numpy}, which induces a reduced computational time (see Figure \ref{fig:error-time}). 
{On these figures, it appears that $\varphi$-FEM and $\varphi$-FD both have interests to solve PDEs. Indeed, while the $L^2$ and $L^\infty$ are pretty close for the two approaches, the $H^1$ error, the conditioning or the computation times are much different: the $\varphi$-FD approach is much faster than the finite element approach while it leads to a slightly worst error on the derivatives of the solution.} Moreover, for the two $\varphi$-FD schemes, we observe the supraconvergence phenomenon as for the Shortley-Weller approach. { The theoretical and experimental convergence rate of the $H^1$-norm is larger than the FEM ones, $\mathcal{O}(h^{3/2})$ instead of $\mathcal{O}(h)$. 
This increase of accuracy is called supraconvergence. 
}

\definecolor{darkviolet}{rgb}{0.58, 0.0, 0.83}
\definecolor{cardinal}{rgb}{0.77, 0.12, 0.23}
\definecolor{coral}{rgb}{1.0, 0.5, 0.31}

\begin{figure}[!ht]
\centering
\begin{tikzpicture}
\begin{loglogaxis}[name = ax1, width = .45\textwidth, xlabel = $h$, 
            ylabel = $L^2$ relative error,
            legend style = { at={(1,1)},anchor=south east, legend columns =2,
			/tikz/column 2/.style={column sep = 10pt}}]
      \addplot[mark=x, darkviolet] coordinates {
(0.14142135623730964,0.031519579722260126)
(0.07071067811865482,0.007105909572318148)
(0.03535533905932741,0.001717523433752962)
(0.01767766952966378,0.00042734812624873166)
(0.00883883476483197,0.00010634395666944391)
(0.004419417382415985,2.6507986003439807e-05)
};
\addplot[mark=*] coordinates {
(0.10418890663488801,0.05227379978073314)
(0.05189897025587035,0.012798117171171332)
(0.02602192321559839,0.0032515180823360166)
(0.013030362894840943,0.0008276123625753482)
(0.006515355748437418,0.00020489914440907024)
(0.003257817559846729,5.1337513087827325e-05)
};
\addplot[mark=+,ForestGreen] coordinates {
(0.1414213562373095,0.014073329671113055)
(0.07071067811865475,0.004138091745804491)
(0.035355339059327376,0.000852146922668053)
(0.017677669529663688,0.00022159723604357236)
(0.008838834764831844,5.432846300624399e-05)
(0.004419417382415922,1.414228798096821e-05)
};
\addplot[mark=diamond*,cardinal] coordinates {
(0.1414213562373095,0.015330827686081596)
(0.07071067811865475,0.0033789089800612144)
(0.035355339059327376,0.0008620940986203928)
(0.017677669529663688,0.00020507051643194895)
(0.008838834764831844,5.0490510612585105e-05)
(0.004419417382415922,1.245055344631594e-05)
};
\addplot[mark=triangle*,coral] coordinates {
(0.1414213562373095,0.01785026427235157)
(0.07071067811865475,0.00516851481620919)
(0.035355339059327376,0.0013975683243563346)
(0.017677669529663688,0.00035804269225948144)
(0.008838834764831844,9.067163034266747e-05)
(0.004419417382415922,2.2809894499005287e-05)
};
\logLogSlopeTriangle{0.53}{0.2}{0.12}{2}{black};
\legend{$\varphi$-FEM, Std-FEM, SW, $\varphi$-FD, $\varphi$-FD2}
\end{loglogaxis}
\end{tikzpicture}
\quad
\begin{tikzpicture}
\begin{loglogaxis}[name = ax1, width = .45\textwidth, xlabel = $h$, 
            ylabel = $L^\infty$ relative error,
            legend style = {  at={(1,1)},anchor=south east, legend columns =2,
			/tikz/column 2/.style={column sep = 10pt}}]
      \addplot[mark=x, darkviolet] coordinates {
(0.14142135623730964,0.02098142362383006)
(0.07071067811865482,0.005152287666035191)
(0.03535533905932741,0.0013238675936899048)
(0.01767766952966378,0.00033503208163524313)
(0.00883883476483197,8.467243677297585e-05)
(0.004419417382415985,2.1336517889861777e-05)
};
\addplot[mark=*] coordinates {
(0.10418890663488801,0.05147358745601338)
(0.05189897025587035,0.012515284804823341)
(0.02602192321559839,0.0037446906690078007)
(0.013030362894840943,0.0009469495438065326)
(0.006515355748437418,0.00021984941882140983)
(0.003257817559846729,6.398194618869712e-05)
};
\addplot[mark=+,ForestGreen] coordinates {
(0.1414213562373095,0.015604867237412948)
(0.07071067811865475,0.005264253394800217)
(0.035355339059327376,0.001185005438104144)
(0.017677669529663688,0.0003019129406690106)
(0.008838834764831844,7.875902141956521e-05)
(0.004419417382415922,2.0345753299360074e-05)
};
\addplot[mark=diamond*,cardinal] coordinates {
(0.1414213562373095,0.017476665445005167)
(0.07071067811865475,0.0037540923580809643)
(0.035355339059327376,0.0009598193402651961)
(0.017677669529663688,0.00024399986729963923)
(0.008838834764831844,6.98692133466733e-05)
(0.004419417382415922,1.8783553609440675e-05)
};
\addplot[mark=triangle*,coral] coordinates {
(0.1414213562373095,0.019511924779577063)
(0.07071067811865475,0.00539182306234819)
(0.035355339059327376,0.001414203734315635)
(0.017677669529663688,0.00035900300377722397)
(0.008838834764831844,9.049251220416615e-05)
(0.004419417382415922,2.2714104650961843e-05)
};
\logLogSlopeTriangle{0.53}{0.2}{0.12}{2}{black};
\legend{$\varphi$-FEM, Std-FEM, SW, $\varphi$-FD, $\varphi$-FD2}
\end{loglogaxis}
\end{tikzpicture}
\caption{\textbf{First test case, a 2D example.} $L^{2}$ (left) and $L^{\infty}$ (right) relative errors with respect to the discretization step for $\varphi$-FEM, standard FEM, Shortley-Weller, $\varphi$-FD and $\varphi$-FD2. 
}\label{fig:error1} 
 \end{figure}
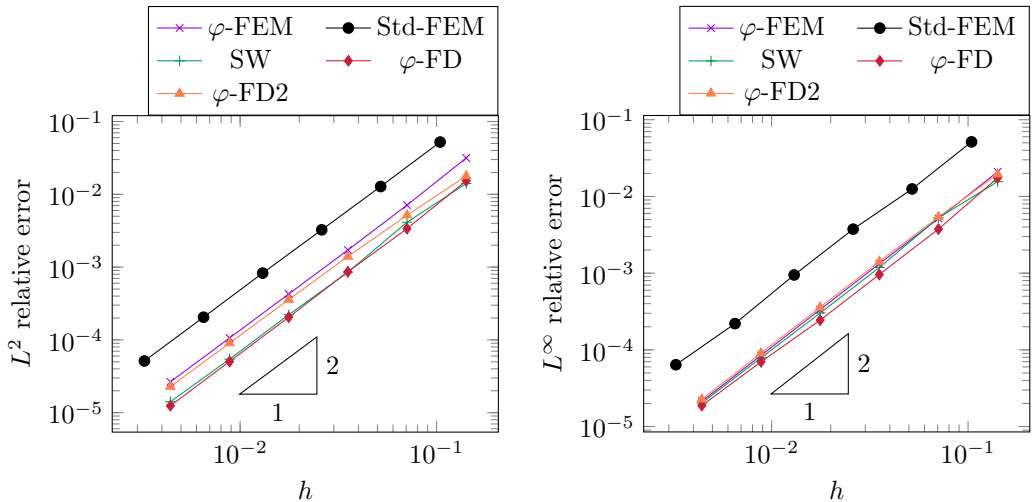 

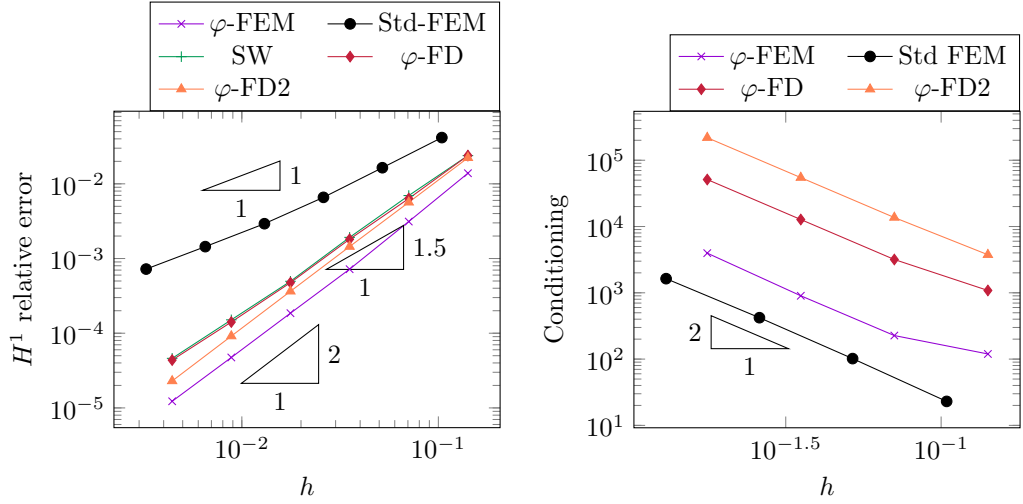
\begin{figure}[!ht]
\centering
\begin{tikzpicture}
\begin{loglogaxis}[name = ax1, width = .45\textwidth, xlabel = $h$, 
            ylabel = $H^1$ relative error,
            legend style = {  at={(1,1)},anchor=south east, legend columns =2,
			/tikz/column 2/.style={column sep = 10pt}}]
      \addplot[mark=x, darkviolet] coordinates {
(0.14142135623730964,0.01386339809396037)
(0.07071067811865482,0.003125765570139466)
(0.03535533905932741,0.0007199505532968982)
(0.01767766952966378,0.00018550080930124924)
(0.00883883476483197,4.73449200793911e-05)
(0.004419417382415985,1.2266193393025097e-05)
};
\addplot[mark=*] coordinates {
(0.10418890663488801,0.04160250730047812)
(0.05189897025587035,0.01642936336778085)
(0.02602192321559839,0.006591166648129152)
(0.013030362894840943,0.002924082997394163)
(0.006515355748437418,0.001442649597173449)
(0.003257817559846729,0.0007223924521806728)
};
\addplot[mark=+,ForestGreen] coordinates {
(0.1414213562373095,0.023868958142536614)
(0.07071067811865475,0.006968673181499712)
(0.035355339059327376,0.0019191819833754816)
(0.017677669529663688,0.0004956450048084294)
(0.008838834764831844,0.00015100630039573633)
(0.004419417382415922,4.607759065068968e-05)
};
\addplot[mark=diamond*,cardinal] coordinates {
(0.1414213562373095,0.023885806391539526)
(0.07071067811865475,0.0063544283924225385)
(0.035355339059327376,0.0018129444281420196)
(0.017677669529663688,0.00047517138175493944)
(0.008838834764831844,0.0001410857936298765)
(0.004419417382415922,4.368289547275989e-05)
};
\addplot[mark=triangle*,coral] coordinates {
(0.1414213562373095,0.022280314854866607)
(0.07071067811865475,0.005652972187080991)
(0.035355339059327376,0.001444541293698455)
(0.017677669529663688,0.0003641171069311043)
(0.008838834764831844,9.148682835786856e-05)
(0.004419417382415922,2.2929945539583716e-05)
};
\logLogSlopeTriangle{0.53}{0.2}{0.14}{2}{black};
\logLogSlopeTriangle{0.75}{0.2}{0.5}{1.5}{black};
\logLogSlopeTriangle{0.43}{0.2}{0.75}{1}{black};
\legend{$\varphi$-FEM, Std-FEM,SW, $\varphi$-FD, $\varphi$-FD2}
\end{loglogaxis}
\end{tikzpicture}
\quad
\begin{tikzpicture}
\begin{loglogaxis}[name = ax1, width = .45\textwidth, xlabel = $h$,   ylabel = Conditioning,
            legend style = { at={(1,1)},anchor=south east, legend columns =2,
			/tikz/column 2/.style={column sep = 10pt}}]
    \addplot[mark=x, darkviolet] coordinates {
    (0.14142135623730964,119.49456333487727)
    (0.07071067811865482,226.56093224668254)
    (0.03535533905932741,901.8110719970891)
    (0.01767766952966378,3966.245842574167)
      };
    \addplot[mark=*] coordinates {
    (0.10418890663488801,22.997824008133747)
    (0.05189897025587035,101.72332149961157)
    (0.02602192321559839,421.22129287440896)
    (0.013030362894840943,1634.0591286161348)      };
    \addplot[mark=diamond*,cardinal] coordinates {
      (0.1414213562373095,1084.9136669607178)
      (0.07071067811865475,3180.567742275954)
      (0.035355339059327376,12747.558351349535)
      (0.017677669529663688,51142.24151659187)
      };
    \addplot[mark=triangle*,coral] coordinates {
      (0.1414213562373095,3752.1456697096455)
      (0.07071067811865475,13670.810197750205)
      (0.035355339059327376,54683.63872911465)
      (0.017677669529663688,218735.16126612716)
      };
      
\logLogSlopeTriangleinv{0.4}{0.2}{0.25}{2}{black};
\legend{$\varphi$-FEM, Std FEM, $\varphi$-FD,$\varphi$-FD2}
\end{loglogaxis}
\end{tikzpicture}

\caption{\textbf{First test case, a 2D example.}  $H^1$ relative error (left) and conditioning number (right) with respect to the discretization step for $\varphi$-FEM, standard FEM, Shortley-Weller, $\varphi$-FD and $\varphi$-FD2.}\label{fig:error2} 
 \end{figure} 

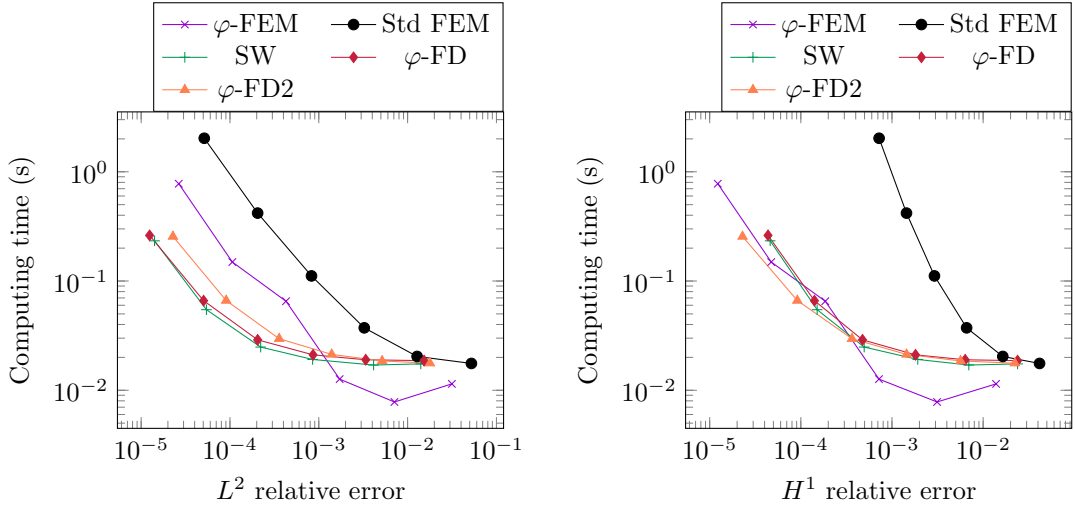
\begin{figure}[!ht]
\centering
\begin{tikzpicture}
\begin{loglogaxis}[width = .45\textwidth, ylabel = Computing time (s), name = ax2, at = {(ax1.south east)}, xshift = 2cm,
            xlabel =$L^2$ relative error, 
            legend style = {  at={(1,1)},anchor=south east, legend columns =2,
			/tikz/column 2/.style={column sep = 10pt}}, 
                ]
           \addplot[mark=x, darkviolet] coordinates {
(0.031519579722260126,0.011440753936767578)
(0.007105909572318148,0.007812738418579102)
(0.001717523433752962,0.01265096664428711)
(0.00042734812624873166,0.06533527374267578)
(0.00010634395666944391,0.1491687297821045)
(2.6507986003439807e-05,0.7775013446807861)
};
\addplot[mark=*] coordinates {
(0.05227379978073314,0.01760244369506836)
(0.012798117171171332,0.02039813995361328)
(0.0032515180823360166,0.03728294372558594)
(0.0008276123625753482,0.11136674880981445)
(0.00020489914440907024,0.4184756278991699)
(5.1337513087827325e-05,2.026933431625366)
};
\addplot[mark=+,ForestGreen] coordinates {
(0.014073329671113055,0.017437219619750977)
(0.004138091745804491,0.017021656036376953)
(0.000852146922668053,0.01914358139038086)
(0.00022159723604357236,0.024857282638549805)
(5.432846300624399e-05,0.054651737213134766)
(1.414228798096821e-05,0.2333214282989502)
};
\addplot[mark=diamond*,cardinal] coordinates {
(0.015330827686081596,0.018668651580810547)
(0.0033789089800612144,0.019017934799194336)
(0.0008620940986203928,0.021099567413330078)
(0.00020507051643194895,0.028948068618774414)
(5.0490510612585105e-05,0.06607651710510254)
(1.245055344631594e-05,0.2617220878601074)
};
\addplot[mark=triangle*,coral] coordinates {
(0.01785026427235157,0.01769733428955078)
(0.00516851481620919,0.01856088638305664)
(0.0013975683243563346,0.021288633346557617)
(0.00035804269225948144,0.029649734497070312)
(9.067163034266747e-05,0.06628227233886719)
(2.2809894499005287e-05,0.25555896759033203)
};        
\legend{$\varphi$-FEM, Std FEM,SW, $\varphi$-FD, $\varphi$-FD2}
\end{loglogaxis}
\end{tikzpicture}
\quad
\begin{tikzpicture}
\begin{loglogaxis}[width = .45\textwidth, ylabel = Computing time (s), name = ax2, at = {(ax1.south east)}, xshift = 2cm,
            xlabel =$H^1$ relative error, 
            legend style = {  at={(1,1)},anchor=south east, legend columns =2,
			/tikz/column 2/.style={column sep = 10pt}}, 
                ]
      \addplot[mark=x, darkviolet] coordinates {
(0.01386339809396037,0.011440753936767578)
(0.003125765570139466,0.007812738418579102)
(0.0007199505532968982,0.01265096664428711)
(0.00018550080930124924,0.06533527374267578)
(4.73449200793911e-05,0.1491687297821045)
(1.2266193393025097e-05,0.7775013446807861)
};
\addplot[mark=*] coordinates {
(0.04160250730047812,0.01760244369506836)
(0.01642936336778085,0.02039813995361328)
(0.006591166648129152,0.03728294372558594)
(0.002924082997394163,0.11136674880981445)
(0.001442649597173449,0.4184756278991699)
(0.0007223924521806728,2.026933431625366)
};
\addplot[mark=+,ForestGreen] coordinates {
(0.023868958142536614,0.017437219619750977)
(0.006968673181499712,0.017021656036376953)
(0.0019191819833754816,0.01914358139038086)
(0.0004956450048084294,0.024857282638549805)
(0.00015100630039573633,0.054651737213134766)
(4.607759065068968e-05,0.2333214282989502)
};
\addplot[mark=diamond*,cardinal] coordinates {
(0.023885806391539526,0.018668651580810547)
(0.0063544283924225385,0.019017934799194336)
(0.0018129444281420196,0.021099567413330078)
(0.00047517138175493944,0.028948068618774414)
(0.0001410857936298765,0.06607651710510254)
(4.368289547275989e-05,0.2617220878601074)
};
\addplot[mark=triangle*,coral] coordinates {
(0.022280314854866607,0.01769733428955078)
(0.005652972187080991,0.01856088638305664)
(0.001444541293698455,0.021288633346557617)
(0.0003641171069311043,0.029649734497070312)
(9.148682835786856e-05,0.06628227233886719)
(2.2929945539583716e-05,0.25555896759033203)
};
\legend{$\varphi$-FEM, Std FEM,SW, $\varphi$-FD, $\varphi$-FD2}
\end{loglogaxis}
\end{tikzpicture}
\caption{\textbf{First test case, a 2D example.} Computing times with respect to the $L^2$ relative error (Left) and the $H^1$ relative error (Right) for $\varphi$-FEM, standard FEM, Shortley-Weller, $\varphi$-FD and $\varphi$-FD2.}\label{fig:error-time} 
\end{figure} 

\begin{table}[!ht]
    \centering
\begin{tabular}{|c|c|c|c|c|c|}
&$\varphi$-FEM&Std FEM&SW&$\varphi$-FD&$\varphi$-FD2\\
Relative $L^2$-error   & 2.04&2.0&2.01&2.05&1.93\\
Relative $L^{\infty}$-error&1.98&1.94&1.95&1.96&1.95\\
Relative $H^1$-error&2.02&1.17&1.82&1.83&1.98\\
\end{tabular}
    \caption{\textbf{First test case, a 2D example.} Orders of convergence.}
    \label{tab:my_label}
\end{table}

To complete this test case and to justify our choice for the parameters $\sigma$ and $\gamma$, we present in Figure \ref{fig:stability_parameters} 
the evolution of the $L^2$ relative error and the condition number of the matrix. This leads to the choice of $\sigma=0.01$ for both schemes and $\gamma= 1$ for the first $\varphi$-FD scheme and $\gamma=10$ for the second scheme. 
{We remark in Fig.~\ref{fig:stability_parameters} that the $L^2$ relative error of the second $\varphi$-FD scheme is more stable to the variations of $\sigma$ than the one of $\varphi$-FD, thanks to the second order term  $\tilde{j}_h$.}

\begin{figure}
    \centering
\begin{tikzpicture}\begin{loglogaxis}[name = ax1, width = .45\textwidth, ylabel = $L^2$ relative error,
xlabel = $\sigma$,
legend style = {at={(1,1)},anchor=south east, legend columns =2,
 /tikz/column 2/.style={column sep = 10pt},
  /tikz/column 4/.style={column sep = 10pt},
  /tikz/column 6/.style={column sep = 10pt}}]
\addplot[mark=*, darkviolet] coordinates {
(0.001,0.013482556811757468)
(0.01,0.015330827686081596)
(0.1,0.03446637668032492)
(1.0,0.08288744622595297)
(5.0,0.0996545440700707)
(10.0,0.1023672676132408)
(20.0,0.10379367126687414)
};

\addplot[mark=*, darkviolet, densely dashed] coordinates {
(0.001,0.019047896099894417)
(0.01,0.01785026427235157)
(0.1,0.012907721706709953)
(1.0,0.015184002570503006)
(5.0,0.01696200126057095)
(10.0,0.017238475545525517)
(20.0,0.017381943238118952)
};

\addplot[mark=x, cardinal] coordinates {
(0.001,0.003157619306759415)
(0.01,0.0033789089800612144)
(0.1,0.006262857619567839)
(1.0,0.015241660800261235)
(5.0,0.018831741036814106)
(10.0,0.019443502449753836)
(20.0,0.01976926621474449)
};

\addplot[mark=x, cardinal, densely dashed] coordinates {
(0.001,0.005328719280811668)
(0.01,0.00516851481620919)
(0.1,0.00449730446005262)
(1.0,0.004176083512989966)
(5.0,0.004208588884415343)
(10.0,0.0042173810098919025)
(20.0,0.004222357874225764)
};

\addplot[mark=+, ForestGreen] coordinates {
(0.001,0.0008218059433541441)
(0.01,0.0008620940986203928)
(0.1,0.0013198691357178011)
(1.0,0.0031183221379327062)
(5.0,0.004026554482389484)
(10.0,0.0042000259509069795)
(20.0,0.0042958425370915035)
};

\addplot[mark=+, ForestGreen, densely dashed] coordinates {
(0.001,0.0014104775532671606)
(0.01,0.0013975683243563344)
(0.1,0.0013271443705287606)
(1.0,0.0012481009505046691)
(5.0,0.0012360522080803117)
(10.0,0.0012349970593544316)
(20.0,0.0012345848819652406)
};

\addplot[mark=triangle, coral] coordinates {
(0.001,0.000199155169239603)
(0.01,0.00020507051643194895)
(0.1,0.0002892200356736449)
(1.0,0.0006307390800359072)
(5.0,0.000790772295741882)
(10.0,0.000820111812843949)
(20.0,0.0008363369522984611)
};

\addplot[mark=triangle, coral, densely dashed] coordinates {
(0.001,0.00035999812831802005)
(0.01,0.0003580426922594815)
(0.1,0.000347705962979847)
(1.0,0.00033541649802375847)
(5.0,0.0003328668542449432)
(10.0,0.0003325218030533616)
(20.0,0.00033234976595158375)
};

\end{loglogaxis}
\begin{loglogaxis}[name = ax2, at={(ax1.south east)}, xshift=2cm, width = .45\textwidth, ylabel = Condition number,
xlabel = $\sigma$,
legend style = {at={(1,1.02)},anchor=south east, legend columns =4,
 /tikz/column 2/.style={column sep = 10pt},
  /tikz/column 4/.style={column sep = 10pt},
  /tikz/column 6/.style={column sep = 10pt}}]
\addplot[mark=*, darkviolet] coordinates {
(0.001,10685.496973981239)
(0.01,1084.9136669607178)
(0.1,871.7114261164152)
(1.0,1807.00952004618)
(5.0,6264.93613945238)
(10.0,11868.873029263726)
(20.0,23083.470314350696)
};

\addlegendentry{$\varphi$-FD $h=$0.14}
\addplot[mark=*, darkviolet, densely dashed] coordinates {
(0.001,36543.79419058322)
(0.01,3752.1456697096455)
(0.1,3508.210376670436)
(1.0,5612.167577278479)
(5.0,21283.76455761581)
(10.0,41159.82027244204)
(20.0,80960.17512200115)
};

\addlegendentry{$\varphi$-FD2 $h=$0.14}
\addplot[mark=x, cardinal] coordinates {
(0.001,10687.087104630795)
(0.01,3180.567742275954)
(0.1,3432.0873058964357)
(1.0,7205.680893276665)
(5.0,25659.417130986476)
(10.0,48861.34477484918)
(20.0,95295.76794832177)
};

\addlegendentry{$\varphi$-FD $h=$0.07}
\addplot[mark=x, cardinal, densely dashed] coordinates {
(0.001,36499.30165600433)
(0.01,13670.810197750205)
(0.1,14032.693522769006)
(1.0,22745.556521432336)
(5.0,84370.47057473166)
(10.0,162174.66156401008)
(20.0,317931.95593083516)
};

\addlegendentry{$\varphi$-FD2 $h=$0.07}
\addplot[mark=+, ForestGreen] coordinates {
(0.001,12743.483542640904)
(0.01,12747.558351349535)
(0.1,13692.236068619897)
(1.0,28214.69263957704)
(5.0,103442.00998916924)
(10.0,197861.68650417766)
(20.0,386775.25521482195)
};

\addlegendentry{$\varphi$-FD $h=$0.04}
\addplot[mark=+, ForestGreen, densely dashed] coordinates {
(0.001,54543.6664004808)
(0.01,54683.63872911465)
(0.1,56131.31510858904)
(1.0,81780.69580935218)
(5.0,330636.492671842)
(10.0,646068.8548820791)
(20.0,1277449.6355356607)
};

\addlegendentry{$\varphi$-FD2 $h=$0.04}
\addplot[mark=triangle, coral] coordinates {
(0.001,51140.16734725085)
(0.01,51142.24151659187)
(0.1,54771.108305898924)
(1.0,114289.61949737286)
(5.0,416944.3379786287)
(10.0,797651.5357303553)
(20.0,1559442.0092255436)
};

\addlegendentry{$\varphi$-FD $h=$0.02}
\addplot[mark=triangle, coral, densely dashed] coordinates {
(0.001,218175.24273622842)
(0.01,218735.16126612716)
(0.1,224526.14941712614)
(1.0,347829.5648022929)
(5.0,1336765.5677214288)
(10.0,2604045.47379523)
(20.0,5144113.277784081)
};

\addlegendentry{$\varphi$-FD2 $h=$0.02}
\end{loglogaxis}
\begin{loglogaxis}[name = ax3, width = .45\textwidth,yshift=-5.25cm, ylabel = $L^2$ relative error,
xlabel = $\gamma$,
legend style = {at={(1,1)},anchor=south east, legend columns =2,
 /tikz/column 2/.style={column sep = 10pt},
  /tikz/column 4/.style={column sep = 10pt},
  /tikz/column 6/.style={column sep = 10pt}}]
\addplot[mark=*, darkviolet] coordinates {
(0.001,5.972958751238064)
(0.01,0.5388136900128185)
(0.1,0.052631852214158246)
(1.0,0.015330827686081596)
(10.0,0.01436023099310335)
(20.0,0.014378283653213437)
};

\addplot[mark=*, darkviolet, densely dashed] coordinates {
(0.001,0.17755861578667473)
(0.01,0.3246285071748899)
(0.1,0.0977580005949103)
(1.0,0.022975331270932216)
(10.0,0.01785026427235157)
(20.0,0.01860302905696651)
};

\addplot[mark=x, cardinal] coordinates {
(0.001,0.784391648040964)
(0.01,0.07603412558344884)
(0.1,0.008320245126328165)
(1.0,0.0033789089800612144)
(10.0,0.003264093132763761)
(20.0,0.0032724484992747425)
};

\addplot[mark=x, cardinal, densely dashed] coordinates {
(0.001,1.306443972856408)
(0.01,0.07190490373056234)
(0.1,0.006457980030494001)
(1.0,0.0043786971385551396)
(10.0,0.00516851481620919)
(20.0,0.0052415233609218875)
};

\addplot[mark=+, ForestGreen] coordinates {
(0.001,0.1043662931983969)
(0.01,0.010796382919287919)
(0.1,0.0015096390189973123)
(1.0,0.0008620940986203928)
(10.0,0.0008190351856074286)
(20.0,0.0008096939265729985)
};

\addplot[mark=+, ForestGreen, densely dashed] coordinates {
(0.001,0.04162231924943216)
(0.01,0.003428929133730719)
(0.1,0.0009904681311119117)
(1.0,0.0013288821120134814)
(10.0,0.0013975683243563344)
(20.0,0.0014025165350792252)
};

\addplot[mark=triangle, coral] coordinates {
(0.001,0.023321262648542375)
(0.01,0.002343756309128225)
(0.1,0.00033155883005805975)
(1.0,0.00020507051643194895)
(10.0,0.00019818330983537266)
(20.0,0.00019689375938356527)
};

\addplot[mark=triangle, coral, densely dashed] coordinates {
(0.001,0.016130920355590998)
(0.01,0.0005216236253383578)
(0.1,0.0002876843343624397)
(1.0,0.0003490616721453896)
(10.0,0.0003580426922594815)
(20.0,0.0003586269102795196)
};

\end{loglogaxis}
\begin{loglogaxis}[name = ax4, at={(ax3.south east)}, xshift=2cm, width = .45\textwidth, ylabel = Condition number,
xlabel = $\gamma$,
legend style = {at={(1,1.02)},anchor=south east, legend columns =4,
 /tikz/column 2/.style={column sep = 10pt},
  /tikz/column 4/.style={column sep = 10pt},
  /tikz/column 6/.style={column sep = 10pt}}]
\addplot[mark=*, darkviolet] coordinates {
(0.001,53642.53510656272)
(0.01,4217.540545721521)
(0.1,1463.0597946900036)
(1.0,1084.9136669607178)
(10.0,3612.7992268630533)
(20.0,6611.539998802812)
};

\addplot[mark=*, darkviolet, densely dashed] coordinates {
(0.001,38079.64319290825)
(0.01,11150.44609299109)
(0.1,1865.2188469122061)
(1.0,1278.096660539153)
(10.0,3752.1456697096455)
(20.0,6813.850389845219)
};

\addplot[mark=x, cardinal] coordinates {
(0.001,24689.829563322903)
(0.01,3260.3794378044404)
(0.1,3148.4048739296395)
(1.0,3180.567742275954)
(10.0,13665.414101292137)
(20.0,25649.440676407878)
};

\addplot[mark=x, cardinal, densely dashed] coordinates {
(0.001,625503.9888428771)
(0.01,8759.688408668932)
(0.1,3180.975606054707)
(1.0,3319.2780130831256)
(10.0,13670.810197750205)
(20.0,25654.104869707637)
};

\addplot[mark=+, ForestGreen] coordinates {
(0.001,20353.34272933246)
(0.01,12740.76535844634)
(0.1,12740.930586425724)
(1.0,12747.558351349535)
(10.0,54661.834770280635)
(20.0,102597.82665388453)
};

\addplot[mark=+, ForestGreen, densely dashed] coordinates {
(0.001,67783.31354662703)
(0.01,12767.15840342791)
(0.1,12769.550944250746)
(1.0,12878.893289688349)
(10.0,54683.63872911465)
(20.0,102616.57871679051)
};

\addplot[mark=triangle, coral] coordinates {
(0.001,51138.52804221878)
(0.01,51138.53946733974)
(0.1,51138.66039104109)
(1.0,51142.24151659187)
(10.0,218647.6176391304)
(20.0,410391.3954943867)
};

\addplot[mark=triangle, coral, densely dashed] coordinates {
(0.001,875404.5254571759)
(0.01,51156.76837633022)
(0.1,51159.752494982684)
(1.0,51948.237443105834)
(10.0,218735.16126612716)
(20.0,410466.5241067579)
};

\end{loglogaxis}
\end{tikzpicture}
\caption{\textbf{First test case, a 2D example.} 
Top: evolution of the $L^2$ relative errors (left) and condition number (right) with respect to $\sigma$ with $\gamma=1$ for $\varphi$-FD and $\gamma=10$ for $\varphi$-FD2. Bottom: evolution of the $L^2$ relative errors (left) and condition number (right) with respect to $\gamma$ with $\sigma=0.01$ for $\varphi$-FD and $\varphi$-FD2.}\label{fig:stability_parameters}
\end{figure}
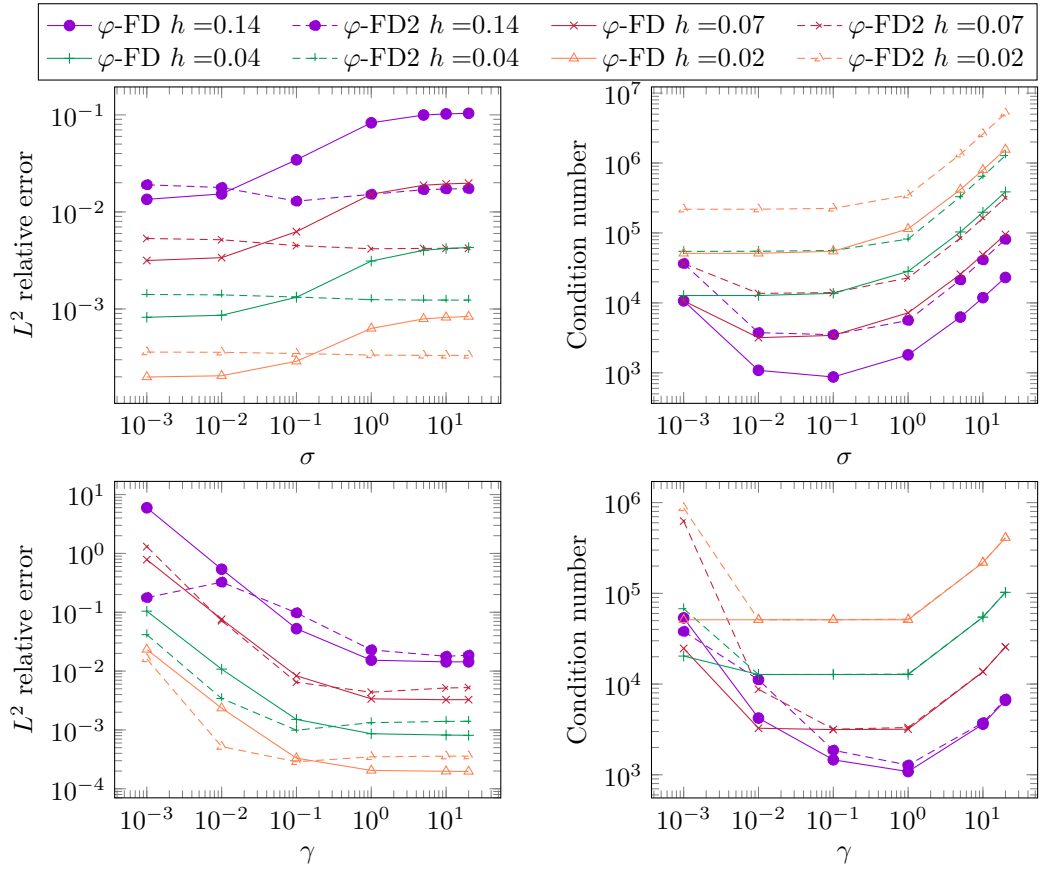

\subsection{Second test case: a 3D example}
We now consider a three-dimensional extension of the previous test case, i.e., the same explicit solution, in a sphere centered at $(0.5,0.5,0.5)$, with a radius $R = 0.3$ and 
\[r = \frac{1}{R}\sqrt{(x-0.5)^2 + (y-0.5)^2 + (z-0.5)^2}\,. \]

Once again, the optimal $h^2$ convergence is observed in the $L^2$ and $H^1$ norms (see Fig.~\ref{fig:error_L2_3D}). {Moreover, our two schemes outperform the two finite element methods {as well as the Shortley-Weller approach}.}

\begin{figure}[!ht]
  \centering
  \begin{tikzpicture}
  \begin{loglogaxis}[name = ax1, width = .45\textwidth, xlabel = $h$, 
              ylabel = $L^2$ relative error,
              legend style = { at={(1,1)},anchor=south east, legend columns =2,
        /tikz/column 2/.style={column sep = 10pt}}]
     \addplot[mark=x, darkviolet] coordinates {
(0.13323467750529835,0.08341681598786607)
(0.10188534162169868,0.04314611413929906)
(0.08247860988423233,0.02618574342570295)
(0.06928203230275516,0.01810880258595267)
(0.0597258899161683,0.013205458672567862)
(0.052486388108147944,0.010077366566985001)
(0.042245141648021435,0.0063946169550997455)
};
\addplot[mark=*] coordinates {
(0.14605582938901915,0.06281077629014724)
(0.09952415050160912,0.037236232103842)
(0.07442191545542995,0.023938555344681933)
(0.059298268538494875,0.01796360435331951)
(0.04965607769491114,0.015018882809393204)
(0.042349252328999984,0.013084332858446106)
(0.0373073627526282,0.01193781358038744)
};
\addplot[mark=+,ForestGreen] coordinates {
(0.11785113019775793,0.017500292657541175)
(0.08838834764831845,0.011929873036535294)
(0.07071067811865475,0.00577596266603567)
(0.05892556509887897,0.0037019247203674793)
(0.05050762722761054,0.0030810402686364657)
(0.04419417382415922,0.002388085713820419)
(0.035355339059327376,0.0014123937056703313)
};
\addplot[mark=diamond*,cardinal] coordinates {
(0.11785113019775793,0.009753717778436305)
(0.08838834764831845,0.00555470119731129)
(0.07071067811865475,0.0035008173261974244)
(0.05892556509887897,0.0024436046001389544)
(0.05050762722761054,0.0017823500612321247)
(0.04419417382415922,0.0014018801911592136)
(0.035355339059327376,0.000869556551363951)
};
\addplot[mark=triangle*,coral] coordinates {
(0.11785113019775793,0.014202962719482267)
(0.08838834764831845,0.008133570782494035)
(0.07071067811865475,0.005487971964463171)
(0.05892556509887897,0.0038856762359612503)
(0.05050762722761054,0.0028655392487522656)
(0.04419417382415922,0.0022134065336017602)
(0.035355339059327376,0.0014322396452467923)
};
    
  \logLogSlopeTriangle{0.53}{0.2}{0.12}{2}{black};
  \legend{$\varphi$-FEM, Std-FEM,SW,  $\varphi$-FD, $\varphi$-FD2}
  \end{loglogaxis}
  \end{tikzpicture}
  \quad
  \begin{tikzpicture}
  \begin{loglogaxis}[name = ax1, width = .45\textwidth, xlabel = $h$, 
              ylabel = $H^1$ relative error,
              legend style = {  at={(1,1)},anchor=south east, legend columns =2,
        /tikz/column 2/.style={column sep = 10pt}}]
\addplot[mark=x, darkviolet] coordinates {
(0.13323467750529835,0.0419045876238796)
(0.10188534162169868,0.019827323321024114)
(0.08247860988423233,0.011481016625787087)
(0.06928203230275516,0.007664687843199209)
(0.0597258899161683,0.005581982251160239)
(0.052486388108147944,0.0042299461228534375)
(0.042245141648021435,0.002637861861265995)
};
\addplot[mark=*] coordinates {
(0.14605582938901915,0.06169695127733516)
(0.09952415050160912,0.03506415877159911)
(0.07442191545542995,0.025232582815480055)
(0.059298268538494875,0.019334025365728858)
(0.04965607769491114,0.014665322955418043)
(0.042349252328999984,0.012685342456948459)
(0.0373073627526282,0.011784490354973443)
};
\addplot[mark=+,ForestGreen] coordinates {
(0.11785113019775793,0.01976369577996817)
(0.08838834764831845,0.009852564567696138)
(0.07071067811865475,0.007555917346743244)
(0.05892556509887897,0.004854261680718782)
(0.05050762722761054,0.004108911706457663)
(0.04419417382415922,0.002849872830639567)
(0.035355339059327376,0.0019279882516062155)
};
\addplot[mark=diamond*,cardinal] coordinates {
(0.11785113019775793,0.01893755399635284)
(0.08838834764831845,0.01020485364498072)
(0.07071067811865475,0.007002994025556655)
(0.05892556509887897,0.004767459201157143)
(0.05050762722761054,0.0037889199665225933)
(0.04419417382415922,0.002990968746838273)
(0.035355339059327376,0.0018923183390207932)
};
\addplot[mark=triangle*,coral] coordinates {
(0.11785113019775793,0.016691819378303)
(0.08838834764831845,0.009250584226112099)
(0.07071067811865475,0.005946832554891426)
(0.05892556509887897,0.00414427156175043)
(0.05050762722761054,0.0030381140935776757)
(0.04419417382415922,0.0023340612499293473)
(0.035355339059327376,0.0014981038585128027)
};

  \logLogSlopeTriangle{0.53}{0.2}{0.12}{2}{black};
  \legend{$\varphi$-FEM, Std-FEM, SW, $\varphi$-FD, $\varphi$-FD2}
  \end{loglogaxis}
  \end{tikzpicture}
  \caption{\textbf{Second test case, a 3D example.} $L^{2}$ (left) and $H^1$ (right) relative errors with respect to the discretisation step for $\varphi$-FEM, standard FEM, Shortley-Weller, $\varphi$-FD and $\varphi$-FD2.
  }\label{fig:error_L2_3D} 
   \end{figure}
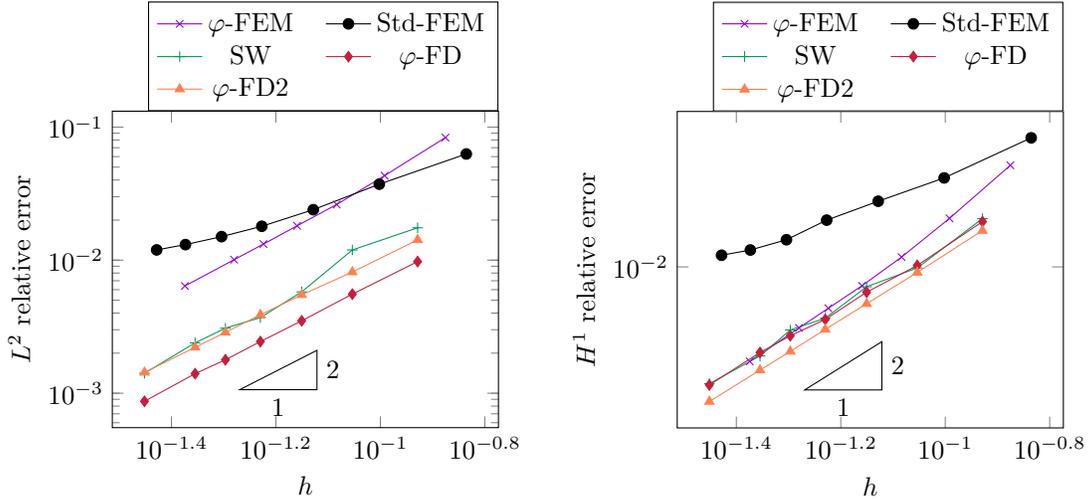

\subsection{Third test case: combination with a multigrid approach}\label{sec:multi}

Another advantage of using Cartesian grids is that we can take advantage of the structured multigrid solvers \cite{adams} in order to improve the stability and to speed up the numerical method. The multigrid method is based on combining relaxation schemes and a hierarchy of coarser grids. After applying point relaxation on the finest grid, a correction term is found by representing the fine-grid residual on the next coarsest grid and using point relaxation there. Recursively, a hierarchy of grids is obtained, and the algorithm is stopped when the problem is coarsened enough to be solved directly.  \cite{fedorenko} describes the different iterative techniques for solving elliptic difference problems: simple iteration method, Seidel's method, Richardson's method, Young's method, relaxation method and minimal residuals method.  \cite{hackbush} gives a description of a multigrid method for the solution of Poisson equation on general bounded regions with numerical examples. Two important components in multigrid methods are the restriction and prolongation operators, which transfer the information between grids. In \cite{Ruggiu}, they have used Summation-by-Parts preserving interpolation operators, which lead to accurate and stable coarse-grid approximations. In the last section of the present article, we propose a multigrid-like technique to obtain a good compromise regarding the computation time concerning the error.

To reduce the computational time of the numerical resolution, we propose a way to combine 
our numerical scheme $\varphi$-FD with a multigrid approach. 
The idea is to use the $\varphi$-FD solution obtained on a coarse grid using a direct linear solver to initialize the $\varphi$-FD resolution on a finer grid in the case of an iterative resolution of the associated linear system. More precisely, the algorithm will be divided into three steps: 
\begin{enumerate}
    \item\textbf{Step 1, direct resolution on coarse grid:} we compute a coarse $\varphi$-FD solution $u_0$ on a coarse grid 
    {$N_0^n$} with a direct solver.
    \item \textbf{Step 2, interpolation on the fine grid:} we consider $u_1$ the interpolation by splines {of order 2} of $u_0$ on a given fine grid 
    {$N_{\text{obj}}^n$} with $N_{\text{obj}}>> N_0$.
    \item \textbf{Step 3, iterative resolution on fine grid:} we compute a $\varphi$-FD solution $u_2$ on the fine grid with an iterative linear solver and $u_1$ as initialisation.
\end{enumerate}
{In 2D, we will compare this algorithm with the two following methods:}
\begin{itemize}
  \item \textbf{Direct method:} we solve a problem with a direct solver for several resolutions $N_0\times N_0$ and we interpolate the solution to the fine grid 
  {$N_{\text{obj}}^n$}. {The direct solver used here is the standard one from \texttt{scipy}, i.e. a LU solver.} 
  \item \textbf{Iterative method:} the same process is applied except that we use this time an iterative solver{, namely the stabilized conjugate bigradient}. 
\end{itemize}
{In 3D, we only compare our approach to the iterative method.}
We will consider the 2D and 3D examples presented in the previous subsections.  $N_{\text{obj}}$ will be fixed to $2200$ and $200$ for the 2D and 3D cases, respectively.  
{All the iterative solvers have the same tolerance for the interior relative residues, i.e. $10^{-4}$.}
All the compatible iterative solvers of the python library \texttt{scipy} have been tested by the authors, but the stabilized conjugate bigradient\footnote{https://docs.scipy.org/doc/scipy/reference/generated/scipy.sparse.linalg.bicgstab.html} has always proven to be the best. Note that the simple conjugate gradient cannot be used because the matrix $A$ is not symmetric. 
\begin{remark}
\begin{itemize}
    \item Another point is that one can also add an intermediate step, solving a finer problem with resolution $N_0 < N_1 < N_{\text{obj}}$ to reduce the number of iterations of the last solver. However, this approach was not necessary for our test cases and increased the number of parameters to tune in the pipeline (tolerance and maximal number of iterations of the intermediate solver, intermediate grid, parameters of the intermediate interpolation). 
    \item If a $\varphi$-FD scheme is subsequently developed for non-linear equations, this multigrid approach can be applied to the iterations of Newton's algorithm.
\end{itemize}
\end{remark}

The results in Fig.~\ref{fig:results_multigrid} (left) illustrate that our approach is better than the two baseline methods: indeed, we reach a better precision (due to the final iterative solver) much faster since we only need a few iterations of the fine linear solver. On each baseline curve, we add the discretization used for the resolution, and on the multigrid curves, the one used for the coarse solvers. 
{Since we have chosen to use the multigrid approach using an interpolation of $f$ and $\varphi$ from the fine resolution to the coarse one, the computation times contain only the times to solve the linear systems and the time to interpolate $u$ from the resolution $N_0$ to the $N_{\text{obj}}$ for the multigrid approach.}

As previously said, one of the issues of the $\varphi$-FD technique, and all the finite difference techniques is the growth of the size of the linear system to solve, especially in 3D: the matrix $A$ collects $(N+1)^6$ values for a resolution $N$. Hence, one would always need to use an iterative solver to solve 3D problems with this approach. However, applying an iterative solver without any initial guess with a resolution $N=200$ leads to solving a problem with a matrix $A$ collecting more than $10^{13}$ values. Even using the sparsity of the matrix results in a gigantic system that takes a long time to solve. As illustrated in Fig.~\ref{fig:results_multigrid} (right), our approach gives results to such problems much faster than the baseline method, {the iterative method presented before.}

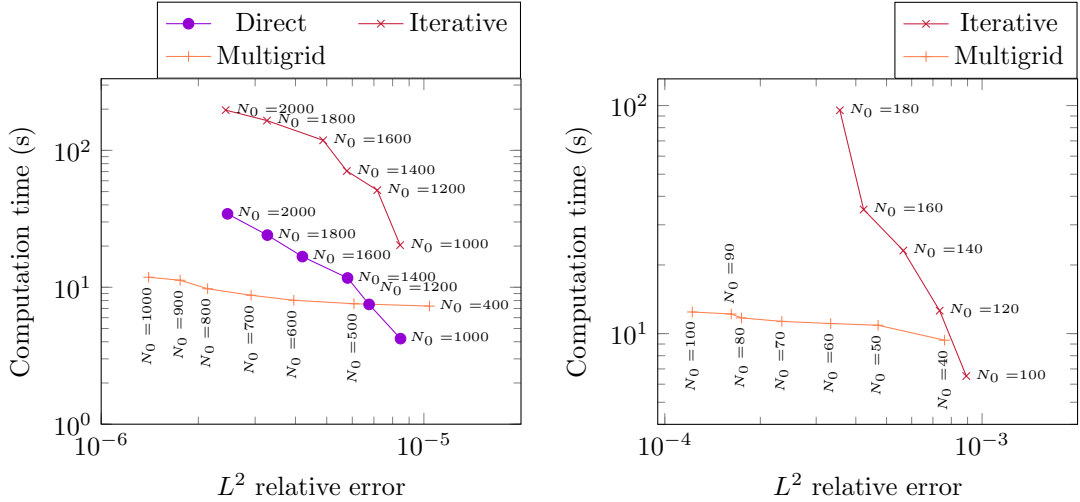
\begin{figure}
    \centering
\begin{tikzpicture}\begin{loglogaxis}[name = ax1, width = .48\textwidth, ylabel = Computation time (s),
xlabel = $L^2$ relative error,
xmin = 0.000001, xmax = 0.00002, ymin = 1,
legend style = {at={(1,1)},anchor=south east, legend columns =2,
 /tikz/column 2/.style={column sep = 10pt}}]
\addplot[mark=*, darkviolet] coordinates {
(8.46495990033093e-06,4.215105772018433)
(6.765245075799401e-06,7.4975221157073975)
(5.802659837899504e-06,11.69192624092102)
(4.20335684361316e-06,16.750582456588745)
(3.2729222256836493e-06,24.046744108200073)
(2.4635933515668634e-06,34.37465715408325)
};

\node [right] at (axis cs:  8.46495990033093e-06,  4.215105772018433){\tiny$N_0=$1000};
\node [above right] at (axis cs:  6.765245075799401e-06,  7.4975221157073975){\tiny$N_0=$1200};
\node [right] at (axis cs:  5.802659837899504e-06,  11.69192624092102){\tiny$N_0=$1400};
\node [right] at (axis cs:  4.20335684361316e-06,  16.750582456588745){\tiny$N_0=$1600};
\node [right] at (axis cs:  3.2729222256836493e-06,  24.046744108200073){\tiny$N_0=$1800};
\node [right] at (axis cs:  2.4635933515668634e-06,  34.37465715408325){\tiny$N_0=$2000};
\addplot[mark=x, cardinal] coordinates {
(8.441198578731359e-06,20.31014108657837)
(7.169975254426898e-06,51.19670391082764)
(5.7708410521783124e-06,70.68300437927246)
(4.872962067997632e-06,118.49665594100952)
(3.264392388318373e-06,165.0491168498993)
(2.431201015473319e-06,196.8179292678833)
};

\node [right] at (axis cs:  8.441198578731359e-06,  20.31014108657837){\tiny$N_0=$1000};
\node [right] at (axis cs:  7.169975254426898e-06,  51.19670391082764){\tiny$N_0=$1200};
\node [right] at (axis cs:  5.7708410521783124e-06,  70.68300437927246){\tiny$N_0=$1400};
\node [right] at (axis cs:  4.872962067997632e-06,  118.49665594100952){\tiny$N_0=$1600};
\node [right] at (axis cs:  3.264392388318373e-06,  165.0491168498993){\tiny$N_0=$1800};
\node [right] at (axis cs:  2.431201015473319e-06,  196.8179292678833){\tiny$N_0=$2000};
\addplot[mark=+, coral] coordinates {
(1.0410069705403135e-05,7.288126707077026)
(6.07177044339354e-06,7.5857861042022705)
(3.952906774566685e-06,8.032482624053955)
(2.914809986440399e-06,8.745643138885498)
(2.1366247210256463e-06,9.75935435295105)
(1.7567809641359133e-06,11.256510496139526)
(1.4028155817868523e-06,11.841386556625366)
};

\node [right] at (axis cs:  1.0410069705403135e-05,  7.288126707077026){\tiny$N_0=$400};
\node [left, rotate=90] at (axis cs:  6.07177044339354e-06,  7.5857861042022705){\tiny$N_0=$500};
\node [left, rotate=90] at (axis cs:  3.952906774566685e-06,  8.032482624053955){\tiny$N_0=$600};
\node [left, rotate=90] at (axis cs:  2.914809986440399e-06,  8.745643138885498){\tiny$N_0=$700};
\node [left, rotate=90] at (axis cs:  2.1366247210256463e-06,  9.75935435295105){\tiny$N_0=$800};
\node [left, rotate=90] at (axis cs:  1.7567809641359133e-06,  11.256510496139526){\tiny$N_0=$900};
\node [left, rotate=90] at (axis cs:  1.4028155817868523e-06,  11.841386556625366){\tiny$N_0=$1000};
\legend{Direct, Iterative, Multigrid} 
\end{loglogaxis}
\end{tikzpicture}
\quad 
\begin{tikzpicture}\begin{loglogaxis}[name = ax1, width = .48\textwidth, ylabel = Computation time (s),
xlabel = $L^2$ relative error,
xmin = 0.000095, xmax = 0.002, ymin=4,
legend style = {at={(1,1)},anchor=south east, legend columns =1,
 /tikz/column 2/.style={column sep = 10pt}}]
\addplot[mark=x, cardinal] coordinates {
(0.0008920134924437446,6.513899087905884)
(0.0007350803579282396,12.604397535324097)
(0.0005647123515437393,23.131813287734985)
(0.0004233780979161378,35.00911092758179)
(0.00035625871265390304,95.35810470581055)
};

\node [right] at (axis cs:  0.0008920134924437446,  6.513899087905884){\tiny$N_0=$100};
\node [right] at (axis cs:  0.0007350803579282396,  12.604397535324097){\tiny$N_0=$120};
\node [right] at (axis cs:  0.0005647123515437393,  23.131813287734985){\tiny$N_0=$140};
\node [right] at (axis cs:  0.0004233780979161378,  35.00911092758179){\tiny$N_0=$160};
\node [right] at (axis cs:  0.00035625871265390304,  95.35810470581055){\tiny$N_0=$180};
\addplot[mark=+, coral] coordinates {
(0.0007615090577193,9.35623812675476)
(0.000470216361471,10.879122734069824)
(0.0003333142977443,11.071059226989746)
(0.0002338150300595,11.311166048049929)
(0.0001744681262475,11.71899938583374)
(0.00016196463218,12.189433813095093)
(0.0001220917695514,12.439875364303589)
};

\node [left, rotate=90] at (axis cs:  0.0007615090577193,  9.35623812675476){\tiny$N_0=$40};
\node [left, rotate=90] at (axis cs:  0.000470216361471,  10.879122734069824){\tiny$N_0=$50};
\node [left, rotate=90] at (axis cs:  0.0003333142977443,  11.071059226989746){\tiny$N_0=$60};
\node [left, rotate=90] at (axis cs:  0.0002338150300595,  11.311166048049929){\tiny$N_0=$70};
\node [left, rotate=90] at (axis cs:  0.0001744681262475,  11.71899938583374){\tiny$N_0=$80};
\node [right, rotate=90] at (axis cs:  0.00016196463218,  12.189433813095093){\tiny$N_0=$90};
\node [left, rotate=90] at (axis cs:  0.0001220917695514,  12.439875364303589){\tiny$N_0=$100};
\legend{Iterative, Multigrid} 
\end{loglogaxis}
\end{tikzpicture}
  \caption{{\textbf{Third test case, multigrid approach:} Computational time with respect to the $L^2$ relative error for the direct, iterative and multigrid method for 2D (left) and 3D (right) examples.} 
  }\label{fig:results_multigrid}
\end{figure}

\section{Conclusion and perspectives}

In this work, we have proposed a well-conditioned finite difference method inspired by the $\varphi$-FEM approach for solving elliptic PDEs on general geometries. The key advantages of the proposed $\varphi$-FD method can be summarized as follows:

\begin{itemize}
    \item \textbf{Well-conditioned Matrices:} The method produces well-conditioned matrices, which ensure stability and efficiency during the numerical resolution of PDEs.
    \item \textbf{Quasi-optimal Convergence:} The $\varphi$-FD scheme achieves quasi-optimal convergence rates, demonstrating accuracy comparable to other established methods.
    \item \textbf{Compatibility with Multigrid Techniques:} Our method is fully compatible with multigrid approaches, allowing further acceleration of the numerical solution process, especially for large-scale problems.
 \end{itemize}
The proposed method opens several avenues for future research and development:
\begin{itemize}
    \item \textbf{Neumann Boundary Conditions:} An extension of the $\varphi$-FD method to handle Neumann boundary conditions is a natural next step, enabling the application of this technique to a broader class of PDEs.
    \item \textbf{Proof for the Second Scheme:} While we have introduced an alternative $\varphi$-FD scheme, proof of its convergence properties is still pending. This will be an essential step to validate and potentially optimize the scheme further.
    \item \textbf{Non-linear Problems and Multigrid Implementation:} Another promising direction is to apply the $\varphi$-FD scheme to non-linear PDEs, combined with a multigrid approach within Newton's iterative method. This could significantly enhance the efficiency and applicability of the method in solving complex, real-world problems.
    \item {\textbf{Combination with a neural network:} As it has been proposed in \cite{duprez2024varphi}, where $\varphi$-FEM is combined with a neural operator, one could also imagine an adaptation to the $\varphi$-FD approach to generate a collection of precise data to train a neural operator.
    }
    \item {\textbf{Supraconvergence:} as highlighted in Remark \ref{rem:convergence}, some supraconvergence phenomena appear in theoretical and numerical results. It could be interesting to analyze these aspects in detail. However, this behavior is probably due to the use of the classical Laplacian operator and would disappear in the case of more complex differential operators.}
    \item {\textbf{Regularity of $\varphi$ and polygonal geometries:} in the theoretical setting of our method, $\varphi$ is required to be $\mathcal{C}^2$ as in the $\varphi$-FEM framework. 
    Concerning polygonal geometries with outgoing corners, we can ensure enough regularity on $\varphi$. 
    Indeed, for instance, for the square $[0,1]^2$, we can take $\varphi=-x(x-1)y(y-1)$. This has been tested in the case of $\varphi$-FEM in \cite{duprez} and we can expect the same behavior for $\varphi$-FD. We remark that this definition of $\varphi$ is not strictly a level-set. 
    The case of entering corners seems more complex and deserves to be studied in greater detail. 
    }
\end{itemize}

The results obtained in this study indicate that the $\varphi$-FD method has significant potential in numerical analysis and computational science, particularly for problems involving complex geometries and large-scale computations.
However, the theoretical results of the present paper needs more regularity of the exact solution than the finite element approaches.

\section*{Acknowledgement}

The authors thank the anonymous referee whose comments improved the quality of the manuscript.

\appendix
\section{Example of code for \texorpdfstring{$\varphi$}{test}-FD in \texttt{python}}

\begin{lstlisting}[language=Python,backgroundcolor=\color{lightgray!30},
 caption=$\varphi$-FD Python implementation.]
import numpy as np
import scipy.sparse as sp
from scipy.sparse.linalg import spsolve

# Radius of the domain
R = 0.3 + 1e-10

# Parameter of penalization and stabilization
sigma, gamma = 0.01, 1.0

# Construction of the grid
Nx, Ny = 100, 100
x, y = np.linspace(0, 1, Nx + 1), np.linspace(0, 1, Ny + 1)
hx, hy = x[1] - x[0], y[1] - y[0]
X, Y = np.meshgrid(x, y)

# Computation of the exact solution, exact source term and the level-set
r = lambda x, y: np.sqrt((x - 0.5) * (x - 0.5) + (y - 0.5) * (y - 0.5) + 1e-12)
K = np.pi / 2 / R
ue = lambda x, y: np.cos(K * r(x, y))
f = lambda x, y: K * K * np.cos(K * r(x, y)) + K * np.sin(K * r(x, y)) / r(x, y)
phi = lambda x, y: (x - 0.5) * (x - 0.5) + (y - 0.5) * (y - 0.5) - R * R
phiij = phi(X, Y)
ind = (phiij < 0) + 0
mask = sp.diags(diagonals=ind.ravel())
indOut = 1 - ind

# Laplacian matrix
D2x = (1 / hx / hx) * sp.diags(
    diagonals=[-1, 2, -1], offsets=[-1, 0, 1], shape=(Nx + 1, Nx + 1)
)
D2y = (1 / hy / hy) * sp.diags(
    diagonals=[-1, 2, -1], offsets=[-1, 0, 1], shape=(Ny + 1, Ny + 1)
)
D2x_2d = sp.kron(sp.eye(Ny + 1), D2x)
D2y_2d = sp.kron(D2y, sp.eye(Nx + 1))
A = mask @ (D2x_2d + D2y_2d)

# Boundary conditions
diag = np.zeros((Nx + 1) * (Ny + 1))
diagxp = np.zeros((Nx + 1) * (Ny + 1) - 1)
diagxm = np.zeros((Nx + 1) * (Ny + 1) - 1)
diagyp = np.zeros((Nx + 1) * Ny)
diagym = np.zeros((Nx + 1) * Ny)
actGx = np.zeros((Ny + 1, Nx + 1))
actGy = np.zeros((Ny + 1, Nx + 1))

indx = ind[:, 1 : Nx + 1] - ind[:, 0:Nx]
J, I = np.where((indx == 1) | (indx == -1))
for k in range(np.shape(I)[0]):
    if indx[J[k], I[k]] == 1:
        indOut[J[k], I[k]], actGx[J[k], I[k] + 1] = 0, 1
    else:
        indOut[J[k], I[k] + 1], actGx[J[k], I[k]] = 0, 1
phiS = np.square(phiij[J, I]) + np.square(phiij[J, I + 1])
diag[I + (Nx + 1) * J] = phiij[J, I + 1] * phiij[J, I + 1] / phiS
diagxp[I + (Nx + 1) * J] = -phiij[J, I] * phiij[J, I + 1] / phiS
diag[I + 1 + (Nx + 1) * J] = phiij[J, I] * phiij[J, I] / phiS
diagxm[I + (Nx + 1) * J] = -phiij[J, I] * phiij[J, I + 1] / phiS

indy = ind[1 : Ny + 1, :] - ind[0:Ny, :]
J, I = np.where((indy == 1) | (indy == -1))
for k in range(np.shape(I)[0]):
    if indy[J[k], I[k]] == 1:
        indOut[J[k], I[k]], actGy[J[k] + 1, I[k]] = 0, 1
    else:
        indOut[J[k] + 1, I[k]], actGy[J[k], I[k]] = 0, 1
phiS = np.square(phiij[J, I]) + np.square(phiij[J + 1, I])
diag[I + (Nx + 1) * J] += phiij[J + 1, I] * phiij[J + 1, I] / phiS
diagyp[I + (Nx + 1) * J] = -phiij[J, I] * phiij[J + 1, I] / phiS
diag[I + (Nx + 1) * (J + 1)] += phiij[J, I] * phiij[J, I] / phiS
diagym[I + (Nx + 1) * J] = -phiij[J, I] * phiij[J + 1, I] / phiS

B = (gamma / hx / hy) * sp.diags(
    diagonals=(diagym, diagxm, diag, diagxp, diagyp),
    offsets=(-Nx - 1, -1, 0, 1, Nx + 1),
)

# Stabilization
maskGx = sp.diags(diagonals=actGx.ravel())
maskGy = sp.diags(diagonals=actGy.ravel())
C = sigma * hx * hy * (D2x_2d.T @ maskGx @ D2x_2d + D2y_2d.T @ maskGy @ D2y_2d)

# Penalization outside
D = sp.diags(diagonals=indOut.ravel())

# Linear system
A, b = (A + B + C + D).tocsr(), (ind * f(X, Y)).ravel()
u = spsolve(A, b).reshape(Ny + 1, Nx + 1)

# Computation of the errors
uref = ue(X, Y)
e = ind * (u - uref)
eL2 = np.linalg.norm(e) * np.sqrt(hx * hy)
emax = np.linalg.norm(e, np.inf)
print(eL2, emax)
\end{lstlisting}


\end{document}